\documentclass[a4paper, 12pt]{article}

\usepackage[a4paper]{geometry}
\usepackage{amsmath, amsfonts, amssymb, amsthm, mathrsfs, mathtools}
\usepackage{nicematrix}
\usepackage{csquotes}
\usepackage{indentfirst}
\usepackage{chngcntr}
\usepackage{enumitem}
\usepackage{titlesec}
\usepackage{caption}
\usepackage{cmap}
\usepackage{floatrow}
\usepackage{algpseudocode}
\usepackage{algorithm}
\usepackage{lmodern}
\usepackage[colorlinks, linkcolor = purple, citecolor = purple]{hyperref}
\usepackage{orcidlink}
\usepackage{diagbox}
\usepackage{xurl}
\usepackage{hyperref}

\mathtoolsset{showonlyrefs=true}
\titleformat{\section}
 {\normalfont\fontsize{16}{16}\bfseries}{\thesection}{1em}{}

\DeclareFontEncoding{LS2}{}{\noaccents@}
\DeclareFontSubstitution{LS2}{stix}{m}{n}
\DeclareSymbolFont{arrows3}{LS2}{stixtt}{m}{n}
\DeclareMathSymbol{\squareulblack}{\mathord}{arrows3}{"88}

\theoremstyle{definition}

\newtheorem{proposition}{Proposition}

\newtheorem{theorem}{Theorem}

\newtheorem{corollary}{Corollary}
\newtheorem*{definition*}{Definition}
\newtheorem*{lemma*}{Lemma}
\newtheorem*{theorem*}{Theorem}
\newtheorem*{corollary*}{Corollary}

\DeclareMathOperator*{\tr}{tr}
\DeclareMathOperator*{\rank}{rank}

\DeclareMathOperator*{\argmin}{argmin}
\DeclareMathOperator*{\argmax}{argmax}

\renewcommand{\geq}{\geqslant}
\renewcommand{\leq}{\leqslant}

\newcommand{\R}{\mathbb{R}}

\newcommand{\vol}{\mathrm{Vol}}

\newcommand{\cS}{\mathcal{S}}

\newcommand{\cT}{\mathcal{T}}
\newcommand{\cI}{\mathcal{I}}
\newcommand{\cM}{\mathcal{M}}

\newcommand{\normf}[1]{\Vert{#1}\Vert_F}
\newcommand{\norms}[1]{\Vert{#1}\Vert_2}

\makeatletter
\newcommand{\MultL}[1]{%
    \parbox[t]{\dimexpr\linewidth-\ALG@tlm-1em}{\raggedright #1\strut}
}
\makeatother

\algblockdefx[Phase]{Phase}{EndPhase}[1]{\textbf{#1}}{}
\algtext*{EndPhase}

\begin{document}

\title{Subset selection for matrices by column exchange}

\author{
  \orcidlink{0000-0002-9956-8223} Alexander Osinsky
  \thanks{Skolkovo Institute of Science and Technology, Moscow, 121205, Russia}
  \thanks{Marchuk Institute of Numerical Mathematics RAS, Moscow, 119333, Russia}\\
  osinskiy1189@gmail.com
  \and
  \orcidlink{0009-0008-3133-064X} Ivan Kozyrev
  \footnotemark[2]
  \thanks{Moscow Institute of Physics and Technology, Dolgoprudny, Moscow Region, 141701, Russia}\\
  kozyrev.in@phystech.edu
}

\maketitle

\begin{abstract}
The paper considers the problem of finding a submatrix $X_{\cS} \in \mathbb{R}^{m \times k}$ in a matrix $X \in \mathbb{R}^{m \times n}$, such that the spectral or Frobenius norm of $X_{\cS}^{\dag} X$ is limited, which guarantees it provides a good representation of the whole matrix. Such bounds can be reached by applying greedy algorithms, maximizing the submatrix volume. We suggest a modification of a greedy volume maximization, which performs column exchanges asymptotically faster for $n \gg m$ than the known alternatives, while guaranteeing the same bounds on $X_{\cS}^{\dag} X$. In addition, we prove a new upper bound on the number of required exchanges, which is applicable to the new algorithm as well as to other greedy volume maximization algorithms.

\vspace{6pt}
    
\noindent \textbf{Keywords:} subset selection,  locally maximum volume, optimal design, sparse approximation \vspace{6pt}
    
\noindent \textbf{AMS subject classifications:} 65F55, 90C27, 15A18, 62K05
\end{abstract}

\section{Introduction}

We study the problem of finding a representative subset $\cS$ of $k$ columns in a matrix $X \in \R^{m \times n}$ ($m \leq k \leq n$, $m \ll n$ in typical applications). I.e., we search for a submatrix $X_\cS \in \R^{m \times k}$. Quantitatively, the quality of the submatrix can be expressed in the size of coefficients, which are needed to express all other columns. In other words, we need to minimize $\normf{X_\cS^\dag X}$, where $X_\cS^\dag$ denotes the Moore-Penrose pseudoinverse of $X_\cS$. 

It is possible to find a submatrix which satisfies
\begin{equation}\label{eqn:matrix_norm}
\normf{X_\cS^\dag X}^2 \leq m + \frac{m + (c^2 - 1)k}{k - m + 1}(n - k)\,,
\end{equation}
or, for each column $x_j$ with $j \notin \cS$,
\begin{equation}\label{eqn:column_norms}
\norms{X_\cS^\dag x_j}^2 \leq \frac{m + (c^2 - 1)k}{k - m + 1}\,,
\end{equation}
with any factor $c \geqslant 1$. Namely, these properties are satisfied for the so-called $c$-locally maximum volume submatrices \cite{osti_276532,Osinsky23}: submatrices, which have at least $1/c$ of the maximum volume $\vol \left( X_\cS \right) = \sqrt{\det \left(X_\cS X_\cS^T \right)}$ among those, that differ from them in a single column. While there are other greedy alternatives, which can guarantee $c = 1$ \cite{faster_subset_selection}, their cost is quadratic in the total number of columns $n$, while volume maximization is linear in $n$.

Note that finding (square) submatrices of maximum volume up to a factor $e^{Cm}$ with a constant $C$ is NP-hard~\cite{VolNPhard}, which is why algorithms search for a locally maximal volume instead. Similar problems like maximizing volume for $k < Cm$, $C<1$ \cite{VolExpRect} up to an exponential in $k$ factor and minimizing $\normf{X_\cS^{\dag}}$ up to a constant factor \cite{doi:10.1287/moor.2021.1129} are also NP-hard. Whether and how these results can be extended to maximization of a volume for $k > m$ or minimization of $\normf{X_\cS^\dag X}$ remain open problems.

When $k=m$ one can use well-known 
Maxvol~\cite{good_submatrix} algorithm, first utilized in~\cite{Mosaic}. To select $k > m$ columns one can greedily add~\cite{MIKHALEV2018187} or exchange~\cite{Osinsky23} columns. Both of these algorithms, however, have their drawbacks. The first of them cannot guarantee equations~\eqref{eqn:matrix_norm}, \eqref{eqn:column_norms} or similar bounds, and the computational cost of both mentioned algorithms becomes too large, when many columns are required. In this paper we solve this problem by providing a fast greedy exchange algorithm for volume maximization, improving the bound on the number of required exchanges, and showing numerically the advantages of the new approach.

For matrices with orthonormal rows, equation~\eqref{eqn:matrix_norm} directly limits spectral and Frobenius norm ratios:
\[
\begin{split}
\frac{\normf{X_\cS^\dag}^2}{\normf{X^\dag}} &= \frac{\normf{X_\cS^\dag X}}{m} \leq \frac{n - m + 1}{k - m + 1} + (c^2 - 1)\frac{k(n - k)}{m(k - m + 1)}\,,\\
\frac{\norms{X_\cS^\dag}^2}{\norms{X^\dag}} &= \norms{X_\cS^\dag X} \leq 1 + \frac{m + (c^2 - 1)k}{k - m + 1}(n - k)\,.
\end{split}
\]
which guarantees, in the worst case~\cite{GOREINOV19971, OSINSKY2018221, Osinsky2023-bg} or on average~\cite{doi:10.1137/140977898, Osinsky2023-bg}, that the cross or column approximations, using these submatrices, have low error, relative to the best approximation, given by truncated SVD. Properties like \eqref{eqn:column_norms} are also useful in optimal design (G-optimality), while volume optimization itself also provides D-optimal designs~\cite{Huan_Jagalur_Marzouk_2024}. Other applications of highly nondegenerate submatrices with bounded $\normf{X_\cS^\dag X}^2$ include DEIM~\cite{deim}, DLRA-DEIM~\cite{carrel2025}, least-squares regression~\cite{BOUTSIDIS2014273}, rank-deficient least squares~\cite{10.1145/1132973.1132981, doi:10.1137/090780882}, preconditioning~\cite{Arioli2015-eb}, graph signal processing~\cite{chen2015discrete, tsitsvero2016signals, ZHANG2025109668}, and the search for low-stretch spanning trees~\cite{faster_subset_selection}.

\subsection{Our contribution}

Our contribution is two-fold. First, we suggest a fast exchange algorithm (Algorithm~\ref{alg:vol_add_rem}) to select rectangular submatrices of large volume. This algorithm guarantees to reach a submatrix $X_\cS$ that represents all other columns with low coefficients, bounded as in \eqref{eqn:column_norms} and thus also guaranteeing
\begin{equation}\label{eqn:norm_bounds}
\begin{split}
\normf{X_\cS^\dag X}^2 &\leq m + \frac{m + (c^2 - 1)k}{k - m + 1}(n - k)\,,\\
\norms{X_\cS^\dag X}^2 &\leq 1 + \frac{m + (c^2 - 1)k}{k - m + 1}(n - k)\,.
\end{split}
\end{equation}
Each exchange consists of two steps: adding and removing a column, for which we provide a fast and stable approach that works asymptotically faster than the greedy addition \cite{MichDoc} or the maxvol-like greedy exchange \cite{Osinsky23}.

Secondly, we propose an advanced initialization procedure that yields a better lower bound on the starting volume. This directly improves the upper bound on the number of required exchanges to reach~\eqref{eqn:norm_bounds}, effectively reducing the overall complexity by removing the logarithmic factor of the order $\log k$.

Together, these two results decrease the overall cost of reaching the bounds \eqref{eqn:norm_bounds} from $O(n k m \log_c k)$ to $O(nkm + n m^2 \log \log m + n m^2 / \log c)$. Our results also provide stronger bounds on the number of exchanges in both Maxvol \cite{good_submatrix} and Dominant \cite{Osinsky23} algorithms.

New methods are verified on both unstructured data (isotropic matrices with i.i.d. Gaussian entries) and highly structured realistic data (transposed right singular vectors obtained from the oriented edge-vertex incidence matrices of random weighted connected graphs).

\subsection{Organization}

The remainder of the paper is organized as follows. Section~\ref{sec:preliminaries} reviews the necessary background, including the column pivoting QR (CPQR) decomposition and the Dominant algorithms. Section~\ref{sec:vol_add_rem} details the proposed Dominant-split algorithm and its theoretical guarantees. Section~\ref{sec:adv_init} outlines the advanced initialization procedure. Section~\ref{sec:experiments} presents numerical experiments evaluating the practical performance of our methods, and Section~\ref{sec:conclusion} concludes the paper.

\section{Preliminaries}\label{sec:preliminaries}

In this section we describe two algorithms directly related to our contributions, and crucial to understanding the paper. 

\subsection{Column pivoted QR decomposition}

The CPQR~\cite[page 278]{doi:10.1137/1.9781421407944} algorithm is a modification of $QR$ decomposition, which not only performs the decomposition itself, but additionally selects a well-conditioned basis from the columns of the input matrix. This algorithm iteratively selects the column which has the largest component orthogonal to already selected ones.

An important consequence for us is that submatrix $X_\cI$ selected by $CPQR$ cannot have volume differing too much from the optimal volume \cite{subset_selection_complexity}. Here we prove a slightly stronger result

\begin{proposition}
$CPQR$ produces a submatrix $X_\cI$, such that
\begin{equation}\label{eqn:cpqr_bound}
\vol\left(X_\cI\right) \geq \frac{1}{m^{m/2}}\max_{\cT \subseteq \overline{1, n}\,,\ \vert\cT\vert = m} \vol\left(X_\cT\right)\,.
\end{equation}
\end{proposition}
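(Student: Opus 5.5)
The plan is to compare the pivots produced by CPQR with the singular values of $X$. Write the CPQR result as $X P = Q R$ with $R$ upper trapezoidal, and denote its diagonal entries (the pivots) by $r_{11} \geq r_{22} \geq \dots \geq r_{mm}$. Then $\vol(X_\cI) = \prod_{i=1}^m r_{ii}$. On the other side, Cauchy--Binet gives
\[
\max_{|\cT| = m} \vol(X_\cT)^2 \leq \sum_{|\cT| = m} \det(X_\cT)^2 = \det(X X^T) = \prod_{i=1}^m \sigma_i(X)^2 .
\]
We may assume $\rank X = m$; otherwise the right-hand side of~\eqref{eqn:cpqr_bound} is zero and there is nothing to prove. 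It therefore suffices to show
\[
\prod_{i=1}^m r_{ii} \geq m^{-m/2} \prod_{i=1}^m \sigma_i(X).
\]
A direct use of Cauchy--Binet alone loses a factor $\binom{n}{m}^{1/2}$, so the argument has to work step by step through the pivots.

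The key step is a pivot-by-pivot bound. At step $i$, let $E_i$ be the residual of $X$ after projecting out the first $i-1$ selected columns, i.e. the trailing $(m-i+1)\times(n-i+1)$ block of $R$. By the pivoting rule, $r_{ii}$ is the largest column norm of $E_i$. Since $E_i$ has at most $m-i+1$ rows, its squared Frobenius norm is at most $n$ times the largest squared column norm, but that route again produces $n$. Instead I would use the row count: $\|E_i\|_2 \leq \sqrt{n}\, r_{ii}$ is the standard estimate, and it also loses $n$.

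To avoid any dependence on $n$, I would use the scheme of the slightly weaker result in~\cite{subset_selection_complexity}: replace column-wise estimates by volumes directly. Let $\cT^*$ be a maximizing set, and write $\vol(X_{\cT^*}) = \vol(X_\cI)\,\bigl|\det(X_\cI^{-1} X_{\cT^*})\bigr|$. The matrix $C = X_\cI^{-1} X = R_{11}^{-1} R$, where $R = [R_{11}\ R_{12}]$. Hadamard's inequality gives $|\det C_{\cT^*}| \leq \prod_{j \in \cT^*} \norms{c_j}$, so the whole problem reduces to bounding the columns of $C$.

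This last bound is the main obstacle. CPQR guarantees only $|r_{ij}| \leq r_{ii}$ in each column of $R_{12}$, because each pivot dominates the residual norms. The entries of $R_{11}^{-1}R_{12}$, however, can grow like $2^{m}$, so Hadamard applied to $C$ is too crude. I would therefore apply Hadamard to $R$ itself in a row-scaled form. Write $R_{\cT^*} = D\, \tilde R$ with $D = \mathrm{diag}(r_{ii})$. The $j$-th column of $\tilde R$ has entries $r_{ij}/r_{ii}$. Consider the last nonzero index $p$ of column $j$ of $R$. The tail norm $\bigl(\sum_{i \geq s} r_{ij}^2\bigr)^{1/2}$ is at most $r_{ss}$ for every $s$, and this gives $|r_{ij}| \leq r_{ii}$ for each $i$. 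Hence every entry of $\tilde R$ has modulus at most $1$, and each column has norm at most $\sqrt{m}$. Hadamard then yields
\[
\vol(X_{\cT^*}) = |\det R_{\cT^*}| \leq \prod_{i=1}^m r_{ii} \cdot m^{m/2} = m^{m/2}\, \vol(X_\cI),
\]
which is exactly~\eqref{eqn:cpqr_bound}. The step to verify carefully is $|r_{ij}| \leq r_{ii}$ for all $j$, including the nonselected columns $j > m$. This follows because at step $i$ the residual norm of column $j$, namely $\bigl(\sum_{l \geq i} r_{lj}^2\bigr)^{1/2}$, does not exceed the chosen maximum $r_{ii}$.
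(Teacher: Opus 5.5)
Your final argument is correct and is essentially the paper's proof. Both scale $R_{\cT^*}$ by $\mathrm{diag}(r_{11},\dots,r_{mm})^{-1}$, use the pivoting rule to get $|r_{ij}| \le r_{ii}$ for every column $j$, and then bound the determinant of the scaled matrix by $m^{m/2}$: you do this via Hadamard's inequality on columns of norm at most $\sqrt{m}$, the paper via AM--GM with $\normf{\mathrm{diag}^{-1}(R_\cI) R_\cT}^2 \le m^2$.

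Delete the earlier detours. In particular, the opening reduction to $\prod_i r_{ii} \ge m^{-m/2}\prod_i \sigma_i(X)$ asks for a false inequality: for $m=1$ it reads $\max_j |X_{1j}| \ge \norms{X}$.
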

\begin{proof}
  Without loss of generality consider $\cI$ to be the first $m$ columns of $X$. Instead of $X$ we can consider the same question in the factor $R \in \mathbb{R}^{m \times n}$ after QR decomposition $X = QR$, $Q \in \mathbb{R}^{m \times m}$, as multiplication by $Q$ does not change the volume. According to column pivoting criterion we have
  \[
    R_{ii}^2 = \max\limits_j \left\| R_{i:m,j} \right\|_2^2 \geqslant \max\limits_j R_{ij}^2,
  \]
  where $R_{i:m,j}$ contains elements of $j$-th column of $R$ from $i$ to $m$.
  
  Thus, summing the squares of all elements, we have $\normf{{\rm diag}^{-1} ( R_{\cI} ) R_{\cT}}^2 \leqslant m^2$, where we multiplied by the inverse of the diagonal of $R_{\cI}$. Then, using AM-GM inequality to bound the volume through Frobenius norm, we obtain
  \[
    \frac{\vol\left(R_\cT\right)}{\vol\left(R_\cI\right)} = \frac{\vol\left({\rm diag}^{-1} ( R_{\cI} )R_\cT\right)}{\vol\left({\rm diag}^{-1} ( R_{\cI} )R_\cI\right)} \leqslant \left( \frac{\left\| {\rm diag}^{-1} ( R_{\cI} ) R_{\cT} \right\|_F^2}{m} \right)^{m/2} \leqslant m^{m/2}.
  \]\qedhere
\end{proof}
\begin{corollary}
  The same bound holds if instead of CPQR we would use Gaussian elimination with row pivoting, as it also guarantees that in LU decomposition each pivot in U has the largest absolute value in its row and multiplication by $L$ does not change the ratio of volumes.
\end{corollary}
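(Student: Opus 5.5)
The plan is to mirror the CPQR argument, replacing the QR factorization by an LU factorization with the appropriate pivoting. Assume without loss of generality that the pivots selected are the first $m$ columns, so $\cI = \{1,\dots,m\}$. Gaussian elimination with pivoting then gives $X = L U$, where $L \in \R^{m\times m}$ is nonsingular (unit lower triangular up to a row permutation) and $U \in \R^{m \times n}$ is upper trapezoidal. The pivoting guarantees that, at step $i$, the pivot $U_{ii}$ is the entry of largest absolute value among the remaining entries of row $i$:
\[
|U_{ii}| \geq |U_{ij}| \quad \text{for all } j \geq i.
\]
For $j < i$ we have $U_{ij} = 0$, so in fact $|U_{ii}| \geq \max_j |U_{ij}|$ for every column $j$. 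This is exactly the row-wise dominance used in the proof of the Proposition, now holding for $U$ in place of $R$.

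Next I will show that ratios of volumes are preserved. For any $m$-subset $\cT$ we have $X_\cT = L U_\cT$, so
\[
\vol(X_\cT) = |\det L|\,\vol(U_\cT).
\]
Hence $\vol(X_\cT)/\vol(X_\cI) = \vol(U_\cT)/\vol(U_\cI)$. Unlike $Q$, the matrix $L$ is not orthogonal, so individual volumes change; only their ratio is invariant. This is enough, because \eqref{eqn:cpqr_bound} is a statement about a ratio.

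The remaining steps repeat the earlier argument verbatim. Each entry of ${\rm diag}^{-1}(U_\cI)\,U_\cT$ has absolute value at most $1$, so
\[
\normf{{\rm diag}^{-1}(U_\cI)\,U_\cT}^2 \leq m^2.
\]
Also $\vol\bigl({\rm diag}^{-1}(U_\cI)\,U_\cI\bigr) = 1$, since that matrix is triangular with unit diagonal. Applying AM-GM to the squared singular values bounds the volume by $\bigl(\normf{\cdot}^2/m\bigr)^{m/2} \leq m^{m/2}$, which gives \eqref{eqn:cpqr_bound}.

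The one point needing care is the phrase \enquote{row pivoting}. For $X \in \R^{m\times n}$ with column selection, the pivot at step $i$ must be chosen as the largest entry in the current (Schur-complement) row, which corresponds to column interchanges. Equivalently, this is row pivoting applied to $X^T$, with $U^T$ playing the role of $L$. I will fix this convention so that the dominance $|U_{ii}| \geq |U_{ij}|$ holds. I also need the entries of $U$ in row $i$ to be exactly the Schur-complement entries at step $i$. That is standard, and it is the only place where the specific pivoting rule enters.
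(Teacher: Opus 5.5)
Your proposal is correct and follows essentially the same route as the paper. You transfer the row-wise dominance $|U_{ii}| \geq |U_{ij}|$ from $R$ to $U$, note that $X_\cT = L U_\cT$ makes $|\det L|$ cancel in the volume ratio, and rerun the diagonal-scaling plus AM-GM argument of the CPQR proposition. Your clarification that ``row pivoting'' must mean searching the current Schur-complement row (column interchanges, i.e. partial pivoting on $X^T$) correctly pins down the convention the paper's one-line justification assumes.
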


Generalising this to the case of $m \times k$ matrices using Cauchy-Binet formula, we have the following corollary.

\begin{corollary}\label{cor:vol_bounds_k}
If subset of column indices $\cS$ of cardinality $k$ contains subset $\cI$ produced by $CPQR$, then
\begin{equation}\label{eqn:vol_bounds_k}
\vol\left(X_\cS\right) \geq \frac{1}{m^{m/2}\sqrt{\binom{k}{m}}}\max_{\cT \subseteq \overline{1, n}\,,\ \vert\cT\vert = k} \vol\left(X_\cT\right)\,.
\end{equation}
\end{corollary}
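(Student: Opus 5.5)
The plan is to compare the volume of $X_\cS$ with that of the maximizing $X_\cT$ through the Cauchy--Binet formula, passing through square $m \times m$ submatrices where the previous Proposition applies. For any index set $\cT$ with $\vert\cT\vert = k \geq m$, Cauchy--Binet gives
\[
\vol\left(X_\cT\right)^2 = \det\left(X_\cT X_\cT^T\right) = \sum_{\cJ \subseteq \cT,\ \vert\cJ\vert = m} \vol\left(X_\cJ\right)^2\,,
\]
where for square $X_\cJ$ the volume is just $\vert\det X_\cJ\vert$.

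For the upper bound on the maximum, fix the maximizing $\cT$. Each of its $\binom{k}{m}$ square submatrices $X_\cJ$ is an $m$-column submatrix of $X$, so it is bounded by $\max_{\vert\cJ\vert = m} \vol\left(X_\cJ\right)$. The Proposition then bounds this by $m^{m/2}\vol\left(X_\cI\right)$. Summing over all $\cJ \subseteq \cT$ gives
\[
\vol\left(X_\cT\right)^2 \leq \binom{k}{m}\, m^{m}\, \vol\left(X_\cI\right)^2\,.
\]

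For the lower bound, since $\cI \subseteq \cS$ and all terms in the Cauchy--Binet sum for $\cS$ are nonnegative, keeping only the term $\cJ = \cI$ yields $\vol\left(X_\cS\right)^2 \geq \vol\left(X_\cI\right)^2$. Combining the two inequalities and taking square roots gives exactly \eqref{eqn:vol_bounds_k}.

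There is no real obstacle. The only points to check are that the Proposition's maximum is taken over all $m$-subsets of the full index range $\overline{1,n}$, so it applies to every $\cJ \subseteq \cT$, and that the case where $X$ has rank below $m$ is trivial because both sides then vanish.
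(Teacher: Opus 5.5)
Your proposal is correct and follows the same Cauchy--Binet argument that the paper only indicates in a single sentence. You bound each of the $\binom{k}{m}$ terms in the expansion for the maximizer by $m^{m}\vol^2\left(X_\cI\right)$ using the Proposition, and you keep only the $\cI$ term in the expansion for $X_\cS$.
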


This bound allows to limit the number of iterations in Dominant algorithm discussed in the next subsection.

\subsection{Dominant algorithm}

The Dominant algorithm~\cite{Osinsky23} is an algorithm for finding a $c$-locally maximum volume submatrix, i.e. a submatrix $X_\cS$ such that
\begin{equation}\label{eqn:c_loc_opt}
c\vol\left(X_\cS\right) \geq \max_{s \notin \cS\,,\ r\in\cS}\vol\left(X_{\cS + s - r}\right)\,.
\end{equation}
Here, the parameter $c \geq 1$, and when $c = 1$ the submatrix is called locally maximum volume submatrix. This submatrix has some desirable properties~\eqref{eqn:matrix_norm},~\eqref{eqn:column_norms}.

Dominant algorithm reaches such submatrix by iteratively updating the current submatrix $X_\cS$ by swapping one index (say, $r \in \cS$) for another ($s \notin \cS$). The removed and the added columns are selected to maximize the volume $\vol(X_{\cS'})$, where $\cS' = \cS + s - r$. The scores for all possible swaps are effectively updated through iterations, allowing $O(nk)$ iteration cost. Once~\eqref{eqn:c_loc_opt} is satisfied, algorithm terminates.

It is easy to see that such algorithm requires finite number of swaps to terminate, as there is only finite number of submatrices, and algorithm goes from submatrices with smaller volume to ones with larger volume. We can however limit the number of iterations much better if $c > 1$ and the initial submatrix contains columns chosen by $CPQR$ thanks to Corollary~\ref{cor:vol_bounds_k}. Indeed, on each iteration before termination the volume of the selected submatrix grows by a factor of $c$, thus there can not be more than
\begin{equation}\label{eqn:bound_excahnges}
\log_{c}\left(m^{m/2}\sqrt{\binom{k}{m}}\right) = \frac{1}{2}\log_c\left(\frac{k!m^m}{(k - m)!m!}\right) \leq \frac{1}{2}\log_c\left(k^m e^m\right) = \frac{1}{2} m\log_c \left( ek \right)\,.
\end{equation}
iterations. Therefore the total asymptotic complexity of the Dominant algorithm is $O(nkm\log_ck)$. Note that in practice the number of swaps required to reach $c$-locally maximum volume submatrix is usually much lower as we demonstrate in Section~\ref{sec:experiments}.

Overall, the Dominant algorithm is an attractive choice for finding a submatrix with large volume or bounded pseudoinverse. With adequate choice of $c$ it provides bounds almost as tight as those of Algorithm~1 in~\cite{faster_subset_selection}, but has a much lower asymptotic complexity, especially for small $k$. However, as $k$ grows, the algorithm requires more and more computations. 

In the succeeding sections we propose two modifications of the dominant algorithm which target two distinct aspects of it: first we show how to make each iteration cheaper without compromising the bounds, and next we address the matter of choosing the initial submatrix.

\section{Splitting the column exchange}\label{sec:vol_add_rem}

In this section we construct a modification of the Dominant algorithm~\cite{Osinsky23}, which has identical guarantees in terms of spectral and Frobenius norm of the pseudoinverse, but has a lower asymptotic complexity per iteration.

New algorithm runs in $O(n m^2 \log_c k)$ time, compared to $O(n k m \log_c k)$ of the Dominant algorithm. This complexity reduction is achieved by splitting the update: on each iteration we first choose the optimal index $s$ to add:
\[
s \in \argmax_{j \notin \cS}\vol(X_{\cS + j})\,,
\]
and then remove the worst one: 
\[
r \in \argmin_{j \in \cS} \vol(X_{\cS + s - j})\,,
\]
thus performing an exchange. Logic for selecting the initial set $\cS$ and termination criterion stay the same; we perform column swaps while they increase the volume of selected submatrix by a factor of $c > 1$.

This technique provides faster updates, as we need to consider $n - k$ candidate columns for selection and then $k$ options for removal instead of $k \times (n - k)$ possible swaps in Dominant algorithm. The bound on number of swaps~\eqref{eqn:bound_excahnges} still applies, as it is only based on the fact that we increase volume $c$ times on each step. As we will also show, such exchanges still reach the desired bounds including \eqref{eqn:column_norms}.

We now proceed with rigorous analysis of the algorithm, starting with details on how to implement column exchange logic as effectively as possible. 

Given the currently selected indices $\cS$ ($\vert \cS \vert = k \geq m$, $\rank X_\cS = m$), we introduce array $l$, where $l_j$ equals squared norm of $j$-th column of $X_\cS^\dag X$:
\begin{equation}\label{eqn:l_def}
l_j = \left\Vert\left(X_\cS^\dag X\right)_j\right\Vert_2^2 =
\left\Vert X_\cS^T\left(X_\cS X_\cS^T\right)^{-1}x_j\right\Vert_2^2 = x_j^T\left(X_\cS X_\cS^T\right)^{-1}x_j\,.
\end{equation}
Coefficients of $l$ have specific meaning in the context of column exchange. Indeed, suppose $s \notin \cS$. Then, by matrix-determinant lemma, 
\begin{equation}\label{eqn:vol_change_append}
    \vol^2(X_{\cS + s}) = \left(1 + x_s^T\left(X_\cS X_\cS^T\right)^{-1}x_s\right)\vol^2(X_\cS) = (1 + l_s)\vol^2(X_\cS)\,.
\end{equation}
Similarly, for $r \in \cS$ we obtain
\begin{equation}\label{eqn:vol_change_remove}
\vol^2(X_{\cS - r}) = \left(1 - x_r^T\left(X_\cS X_\cS^T\right)^{-1}x_r\right)\vol^2(X_\cS) = (1 - l_r)\vol^2(X_\cS)\,.
\end{equation}

Moreover, we can effectively update $l$ once the new column is added to or removed from $\cS$ as shown in the following proposition. Compared to addition criterion from \cite{MichDoc}, we do the updates based on the matrix $Y = (X_\cS X_\cS^T)^{-1}$.

\begin{proposition}\label{stt:vol_crit}
    Fix $X \in \R^{m \times n}$ ($m \leq n$, $\rank X = m$), $\cS \subseteq \overline{1, n}$ ($\vert \cS \vert = k$, $\rank X_\cS = m$), $Y = (X_\cS X_\cS^T)^{-1}$, and let $l$ be defined as per~\eqref{eqn:l_def}. If a new index $s \notin \cS$ is added to $\cS$, $l'$ corresponding to $\cS + s$ satisfies
    \begin{equation}\label{eqn:l_update_selection}
    l_j' = l_j - \frac{\left(x_s^T Y x_j\right)^2}{1 + l_s}\,.
    \end{equation}
    And if index $r \in \cS$ is removed from $\cS$, $l'$ corresponding to $\cS - r$ satisfies
    \begin{equation}\label{eqn:l_update_removal}
    l_j' = l_j + \frac{\left(x_r^T Y x_j\right)^2}{1 - l_r}\,.
    \end{equation}
\end{proposition}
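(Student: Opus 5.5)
The plan is to apply the Sherman--Morrison formula to the Gram matrix $X_\cS X_\cS^T$ and then evaluate the resulting quadratic form at $x_j$. Since the columns of $X_{\cS+s}$ are those of $X_\cS$ together with $x_s$, we have
\[
X_{\cS+s}X_{\cS+s}^T = X_\cS X_\cS^T + x_s x_s^T = Y^{-1} + x_s x_s^T ,
\]
a rank-one update of an invertible matrix. Likewise, $X_{\cS-r}X_{\cS-r}^T = Y^{-1} - x_r x_r^T$.

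For the addition step, Sherman--Morrison gives
\[
Y' = \left(Y^{-1} + x_s x_s^T\right)^{-1} = Y - \frac{Y x_s x_s^T Y}{1 + x_s^T Y x_s}.
\]
The denominator equals $1 + l_s$ by \eqref{eqn:l_def}, and it is positive because $Y$ is positive definite. Moreover, $\rank X_{\cS+s} = m$, so the new $l'$ is well defined. Substituting into $l_j' = x_j^T Y' x_j$ and using the symmetry of $Y$ (so that $x_j^T Y x_s = x_s^T Y x_j$) yields \eqref{eqn:l_update_selection} immediately.

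For the removal step, Sherman--Morrison with the minus sign gives
\[
Y' = Y + \frac{Y x_r x_r^T Y}{1 - x_r^T Y x_r} = Y + \frac{Y x_r x_r^T Y}{1 - l_r},
\]
and the same substitution produces \eqref{eqn:l_update_removal}.

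The only real obstacle is justifying the removal case. Here we must assume, as is implicit in the statement since $l'$ must be defined for $\cS - r$, that $\rank X_{\cS-r} = m$. By \eqref{eqn:vol_change_remove}, $\vol^2(X_{\cS-r}) = (1-l_r)\vol^2(X_\cS)$, and the left-hand side is nonzero exactly when $\rank X_{\cS-r} = m$. Hence $1 - l_r > 0$, so $Y^{-1} - x_r x_r^T$ is invertible and the Sherman--Morrison formula applies. If $l_r = 1$, the formula is meaningless, which is consistent with $X_{\cS-r}$ being rank deficient. The matrix determinant lemma behind \eqref{eqn:vol_change_append} and \eqref{eqn:vol_change_remove} is therefore exactly what controls the denominators.
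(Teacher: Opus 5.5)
Your proposal is correct and takes the paper's approach: apply Sherman--Morrison to the rank-one up/downdate $X_\cS X_\cS^T \pm x x^T$ and evaluate the resulting quadratic form at $x_j$. Your check that $1 + l_s > 0$, and that $1 - l_r > 0$ exactly when $\rank X_{\cS - r} = m$ (via the matrix determinant lemma), fills in a detail the paper leaves implicit by calling the removal case ``completely analogous.''
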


\begin{proof}
    The proof follows immediately from Sherman-Morrison formula. Indeed, suppose that a new index $s$ was added to $\cS$. Then, 
    \[
    l'_j = x_j^T\left(X_{\cS + s} X_{\cS + s}^T\right)^{-1}x_j = x_j^T\left(Y - \frac{Yx_sx_s^TY}{1 + x_s^TYx_s}\right)x_j = l_j - \frac{\left(x_s^T Y x_j\right)^2}{1 + l_s}\,.
    \]

    The proof for the case of index removal is completely analogous.
\end{proof}

Also note, that since we search for volume, instead of $X$ we can use any matrix $AX$ ($A \in \R^{m \times m}$, $\det(A) \neq 0$), as it does not affect the volume ratio of its submatrices:
\[
\vol \left(\left(AX\right)_\cS\right) = \vol(A)\vol \left(X_\cS\right)\,.
\]
In particular, if we use $A = L^{-1}$ from LQ decomposition of $X$, we do not have numerical problems coming from poor condition number of $Y$, as we usually start from a well conditioned submatrix after CPQR. One can also use $A = X_{\cS}$ fixed at the point, when $\left| \cS \right| = m$, which also limits the condition number of $AX$ and thus maximum condition number of $Y$ (which starts at $Y = (( AX)_{\cS} (AX)_{\cS}^T )^{-1} = I$).

We now provide the pseudocode for the resulting algorithm, denoted as Algorithm~\ref{alg:vol_add_rem}. The properties of this algorithm, including guarantees on the result and asymptotic complexity are stated in Theorem~\ref{thm:vol_add_rem}.

\begin{algorithm}[htbp]
\caption{Dominant-split.}\label{alg:vol_add_rem}
\begin{algorithmic}[1]
\Require $X \in \R^{m \times n}$ ($m \leq n$, $\rank X = m$), starting set of indices $\cS \subseteq \overline{1, n}$ ($\vert\cS\vert = k \geq m$, $\rank X_\cS = m$), parameter $c \geq 1$.
\Ensure Subset of column indices $\cS \subseteq \overline{1,n}$ with $|\cS| = k$.

\State Compute LQ decomposition of $X$, assign $X \gets Q$\footnote{Optional, but good for numerical stability.}.
\State Initialize $Y \gets (X_\cS X_\cS^T)^{-1}$ and array $l$, where $l_j \gets x_j^T Y x_j$ for $j \in \overline{1, n}$. 
\Loop
    \State Select $s \in \argmax_{j \notin \cS} l_j$.
    \State Compute entries of array $l'$, where $l'_j \gets l_j - (x_s^T Y x_j)^2 / (1 + l_s)$ for $j \in \overline{1, n}$. 
    \State Set $Y' \gets Y - Yx_sx_s^TY / (1 + l_s)$.
    \State Select $r \in \argmin_{j \in \cS} l'_j$.
    \If{$(1 + l_s)(1 - l'_r) \leq c^2$}
        \State\textbf{break}
    \EndIf
    \State Update $\cS$: $\cS \gets \cS + s - r$.
    \State Update $l$: $l_j \gets l'_j + (x_rY'x_j)^2 / (1 - l'_r)$ for $j \in \overline{1, n}$. 
    \State Update $Y$: $Y \gets Y' + Y'x_rx_rY'/(1 - l'_r)$.
\EndLoop
\State \textbf{return} $\cS$

\end{algorithmic}
\end{algorithm}

\begin{theorem}\label{thm:vol_add_rem}
    Let matrix $X$, initial subset of indices and parameter $c$ satisfy input requirements of the Algorithm~\ref{alg:vol_add_rem}. Then, set $\cS$ returned by the Algorithm~\ref{alg:vol_add_rem}, satisfies
    \[
    \begin{split}
    \normf{X_\cS^\dag X}^2 &\leq m + \frac{m + (c^2 - 1)k}{k - m + 1}(n - k)\,,\\
    \norms{X_\cS^\dag X}^2 &\leq 1 + \frac{m + (c^2 - 1)k}{k - m + 1}(n - k)\,.
    \end{split}
    \]

    Additionally, if initial subset of indices contains set $\cI$ of cardinality $m$, chosen by $CPQR$ and $c > 1$, Algorithm~\ref{alg:vol_add_rem} requires no more than $O(nm^2\log_ck)$ operations.
    
\end{theorem}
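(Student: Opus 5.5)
The plan is to read the guarantees off the termination condition of Algorithm~\ref{alg:vol_add_rem}, using the trace identity for the projector onto the selected columns. The iteration count then comes from the volume-growth argument behind \eqref{eqn:bound_excahnges}.

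\textbf{Correctness.} First I check that the algorithm is well defined. Each performed exchange multiplies $\vol^2$ by $(1+l_s)(1-l'_r) > c^2 \geq 1$, by \eqref{eqn:vol_change_append} and \eqref{eqn:vol_change_remove}. Hence $\rank X_\cS = m$ is preserved and $1 - l'_r > 0$ throughout.

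At termination, let $s$ maximise $l_j$ over $j \notin \cS$, and let $l'$ correspond to $\cS + s$. For any full-rank set $\cT$, the matrix $X_\cT^T (X_\cT X_\cT^T)^{-1} X_\cT$ is an orthogonal projector of rank $m$. Hence $\sum_{j \in \cT} l_j = m$. Applying this to $\cS + s$ gives $\sum_{j \in \cS} l'_j = m - l'_s$.

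By \eqref{eqn:l_update_selection} with $j = s$, we have $l'_s = l_s - l_s^2/(1+l_s) = l_s/(1+l_s)$. Since $r$ minimises $l'_j$ over $\cS$, it is at most the average:
\[
(1+l_s)\, l'_r \leq \frac{(1+l_s)m - l_s}{k}\,.
\]
The stopping rule $(1+l_s)(1-l'_r) \leq c^2$ then yields
\[
1 + l_s - \frac{(1+l_s)m - l_s}{k} \leq c^2\,,
\]
and rearranging gives
\[
l_s (k - m + 1) \leq m + (c^2-1)k\,.
\]
Because $s$ is the maximiser, this gives \eqref{eqn:column_norms} for every $j \notin \cS$.

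\textbf{Norm bounds.} For the Frobenius bound I use $\normf{X_\cS^\dag X}^2 = \sum_j l_j = m + \sum_{j\notin \cS} l_j$ and sum the column bound over the $n-k$ unselected columns.

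For the spectral bound I write $Y = (X_\cS X_\cS^T)^{-1}$, so that
\[
\norms{X_\cS^\dag X}^2 = \lambda_{\max}\left(Y^{1/2} X X^T Y^{1/2}\right) = \lambda_{\max}\left(I + Y^{1/2} X_{\overline{\cS}} X_{\overline{\cS}}^T Y^{1/2}\right)\,,
\]
where $\overline{\cS}$ is the complement of $\cS$. I bound the largest eigenvalue of the PSD term by its trace, which equals $\sum_{j\notin\cS} l_j$. This gives the second inequality.

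\textbf{Complexity.} The LQ decomposition costs $O(nm^2)$. Initialising $Y$ costs $O(km^2)$, and initialising $l$ costs $O(nm^2)$, for example by forming $YX$. In each iteration, computing $v = Y x_s$ costs $O(m^2)$ and $v^T X$ costs $O(nm)$. The rank-one updates of $l$ and $Y$ cost $O(n + m^2)$, and the two argmax/argmin searches cost $O(n)$. The removal step is handled in the same way with $Y' x_r$. So each iteration costs $O(nm)$.

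For the number of iterations, each non-terminal iteration increases the volume by more than a factor $c$. If the initial set contains the CPQR set $\cI$, Corollary~\ref{cor:vol_bounds_k} bounds the starting volume from below. The count \eqref{eqn:bound_excahnges} then applies verbatim and gives at most $\tfrac12 m \log_c(ek)$ exchanges. The total is therefore $O(nm^2 + nm \cdot m\log_c k) = O(nm^2 \log_c k)$.

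The main obstacle is the correctness step. The split exchange only tests one specific pair $(s,r)$, not all swaps as Dominant does. So the key is to recognise that choosing $r$ as the minimiser over $\cS$ lets one bound $l'_r$ by the average from the trace identity, and that $l'_s = l_s/(1+l_s)$ supplies exactly the $+1$ in the denominator $k-m+1$.
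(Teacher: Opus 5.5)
Your proposal is correct and follows the paper's proof essentially step for step. The stopping test together with $\sum_{j\in\cS+s} l'_j = m$ and $l'_s = l_s/(1+l_s)$ gives the column bound; your averaging of $l'_j$ over $\cS$ is just the paper's averaging of $1-l'_j$. Summing over $j\notin\cS$ gives the Frobenius bound, and the cost is $O(nm)$ per exchange times the count \eqref{eqn:bound_excahnges} from Corollary~\ref{cor:vol_bounds_k}. Your spectral step, writing the spectrum as $I + Y^{1/2}X_{\overline{\cS}}X_{\overline{\cS}}^T Y^{1/2}$ and bounding the PSD part by its trace, matches the paper's subtraction of $m-1$ squared singular values each at least $1$, and it actually justifies $\sigma_i(X_\cS^\dag X)\geq 1$, which the paper only asserts.
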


\begin{proof}
\textbf{Proof of bounds.} First, observe that throughout the iterations, array $l$ is maintained to match~\eqref{eqn:l_def}, and $l'$ is array corresponding to set $\cS + s$. Indeed, update formulas in the algorithm match~\eqref{eqn:l_update_selection} and~\eqref{eqn:l_update_removal}, and matrices $Y$ and $Y'$ are updated according to Sherman-Morrison formula to match $(X_\cS X_\cS^T)^{-1}$ and $(X_{\cS + s} X_{\cS + s}^T)^{-1}$ respectively. Also, replacing $X$ with its orthonormal factor $Q$ does not affect the exchange process, as it does not affect the volume ratios for submatrices, as we mentioned earlier.

Now, suppose that algorithm outputted set $\cS$, as the selected pair of indices failed to increase the volume by a factor of $c$. In terms of $l$ and $l'$ it translates to $(1 + l_s)(1 - l'_r) \leq c^2$, see~\eqref{eqn:vol_change_append},~\eqref{eqn:vol_change_remove}. And since $r$ provides minimum to $l'_j$, $j \in S$, and $(1 + l_s)(1 - l'_s) = 1$, we have
\begin{equation}\label{eqn:col_norm_bound}
\begin{split}
\frac{kc^2 + 1}{1 + l_s} &\geq \sum_{j \in \cS + s}(1 - l'_j) = \sum_{j \in \cS + s} \left(1 - x_j^T\left(X_{\cS + s} X_{\cS + s}^T\right)^{-1}x_j\right)\\
&= k + 1 - \tr\left(X_{\cS + s}^T\left(X_{\cS + s} X_{\cS + s}^T\right)^{-1}X_{\cS + s}\right) = k + 1 - m\,.
\end{split}
\end{equation}
Using the above and noting that $l_s$ provides maximum to $l_j$, $j \notin \cS$ we have
\begin{equation}\label{eqn:ljadd}
\begin{split}
\frac{(n - k)(kc^2 + 1)}{k - m + 1} &\geq \sum_{j \notin \cS} (1 + l_j) = \sum_{j \notin \cS} \left(1 + x_j^T\left(X_\cS X_\cS^T\right)^{-1}x_j\right)\\ 
&= n - k - m +\tr\left(X^T\left(X_\cS X_\cS^T\right)^{-1}X\right) = n - k - m + \normf{X_\cS^\dag X}^2\,.
\end{split}
\end{equation}
Transforming this expression, we obtain
\[
\normf{X_\cS^\dag X}^2 \leq m\frac{n - m + 1}{k - m + 1} + \frac{(c^2 - 1)k(n - k)}{k - m + 1}\,.
\]
The bound \eqref{eqn:column_norms} is similarly obtained by considering each column individually in \eqref{eqn:ljadd}.

Note that $\normf{X_\cS^\dag X}^2$ depends only on orthonormal factor of $X$, and it does not matter whether we have performed the $QR$ decomposition in the beginning of the algorithm or not.

The spectral norm bound follows from the Frobenius norm bound written as
\[
\sum_{i = 1}^m \sigma_i^{2}(X_\cS^\dag X) \leq C(m,k,n)\,.
\]
Moving terms $i=2,\dots,m$ to the right and noting that $\sigma_i(X_\cS^\dag X) \geq 1$ we obtain
\[
\sigma_i^{2}(X_\cS^\dag X) \leq C(m,k,n) - m + 1 = 1 + \frac{m + (c^2 - 1)k}{k - m + 1}(n - k)\,.
\]

\textbf{Proof of complexity.} Both usual and rank-revealing $QR$ decompositions cost $O(nm^2)$. Initializing $Y$ costs $O(m^3)$, computing $l$ requires $O(nm^2)$ operations.

After that each column exchange iteration requires $O(nm)$ iterations to compute $l'$ and update $l$ and $O(m^2)$ operations to compute $Y'$ and update $Y'$, resulting in $O(nm)$ operations in total. Number of swaps is limited by $2m\log_c k$ (See Corollary~\ref{cor:vol_bounds_k} and~\eqref{eqn:bound_excahnges}), resulting in $O(nm^2 + nm^2\log_c k)$ complexity of the Algorithm~\ref{alg:vol_add_rem}.
\end{proof}

\section{Advanced initialization}\label{sec:adv_init}

The number of steps/exchanges for the update procedure heavily depends on the initial ratio between the volume of the starting submatrix and the maximum volume. When the initial columns are selected at random, we expect to do much more steps, than when the columns were all added greedily. Indeed, as we will show, we can often completely get rid of (or reduce) the logarithmic factor in complexity. To do that, we first establish a connection between the volume of submatrices produced by Algorithm~\ref{alg:vol_add_rem} and the maximum volume submatrices, generalizing a similar result from~\cite{mearxiv,Maxvol2bound}.

\begin{theorem}\label{thm:locvol_maxvol}
Let matrix $X$, subset $\cS$ of cardinality $k$ and parameter $c$ satisfy input requirements of Algorithm~\ref{alg:vol_add_rem}, and be such that algorithm exits immediately, without performing any swaps. Then,
\begin{equation}\label{eqn:maxvol2-cons_res}
\vol \left(X_\cS\right) \geq \left( \frac{k+1}{k-m+1} \left( 1 + \left( c^2 - 1 \right) \frac{k}{m} \right) \right)^{-\frac{m}{2}} \max_{\cT \subseteq \overline{1, n}\,,\ \vert\cT\vert = k} \vol\left(X_\cT\right)\,.
\end{equation}

Additionally, if $m + (c^2 - 1)m \geq k - m + 1$ (in particular, if $k \leqslant 2m-1$), then
\begin{equation}\label{eqn:maxvol2-cons_res2}
\vol \left(X_\cS\right) \geq \left( \frac{k}{k-m+1} \left( 1 + \left( c^2 - 1 \right) \frac{k}{m} \right) \right)^{-\frac{m}{2}} \max_{\cT \subseteq \overline{1, n}\,,\ \vert\cT\vert = k} \vol\left(X_\cT\right)\,.
\end{equation}
\end{theorem}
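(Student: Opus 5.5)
The plan is to compare $X_\cT$ with $X_\cS$ through the matrix $Y=(X_\cS X_\cS^T)^{-1}$, and to bound the quantities $l_j$ using the termination condition of Algorithm~\ref{alg:vol_add_rem}.

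\textbf{Reduction to a trace.} For any $\cT$ with $\vert\cT\vert=k$ and $\rank X_\cT=m$,
\[
\frac{\vol^2(X_\cT)}{\vol^2(X_\cS)}=\det\left(Y^{1/2}X_\cT X_\cT^T Y^{1/2}\right)\leq\left(\frac{\tr\left(X_\cT^T Y X_\cT\right)}{m}\right)^m=\left(\frac{1}{m}\sum_{j\in\cT} l_j\right)^m .
\]
The inequality is AM-GM on the eigenvalues of a positive semidefinite $m\times m$ matrix. So it suffices to bound $\sum_{j\in\cT}l_j$ uniformly over $\cT$.

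\textbf{Bounds on individual $l_j$.} Since the algorithm exits immediately, inequality~\eqref{eqn:col_norm_bound} from the proof of Theorem~\ref{thm:vol_add_rem} applies to the current $\cS$. Together with maximality of $l_s$, it gives for every $j\notin\cS$
\[
l_j\leq L:=\frac{kc^2+1}{k-m+1}-1=\frac{m+(c^2-1)k}{k-m+1}.
\]
For $j\in\cS$ we have $l_j\leq1$, and $\sum_{j\in\cS}l_j=\tr\left(X_\cS^TYX_\cS\right)=m$.

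\textbf{The second bound \eqref{eqn:maxvol2-cons_res2}.} Here the hypothesis gives $m+(c^2-1)k\geq m+(c^2-1)m\geq k-m+1$, so $L\geq1$. Then every $j$ satisfies $l_j\leq L$, and hence $\sum_{j\in\cT}l_j\leq kL$. Since
\[
\frac{kL}{m}=\frac{k}{k-m+1}\left(1+(c^2-1)\frac{k}{m}\right),
\]
substituting into the trace reduction and maximizing over $\cT$ yields \eqref{eqn:maxvol2-cons_res2}.

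\textbf{The first bound \eqref{eqn:maxvol2-cons_res}.} The target is $\frac1m\sum_{j\in\cT}l_j\leq\frac{(k+1)L}{m}$, because
\[
\frac{(k+1)L}{m}=\frac{k+1}{k-m+1}\left(1+(c^2-1)\frac{k}{m}\right).
\]
Let $t=\vert\cT\cap\cS\vert$. The crude estimate gives
\[
\sum_{j\in\cT}l_j\leq\min(t,m)+(k-t)L\leq\max(kL,\;m+(k-m)L).
\]
This is at most $(k+1)L$ only when $L\geq m/(m+1)$, and that can fail for large $k$. Handling the case where $\cT$ overlaps $\cS$ heavily and $L$ is small is therefore the main obstacle.

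What I would try first is to work relative to $\cS+s$ instead of $\cS$. In that setting
\[
\vol^2(X_\cT)\leq(1+l_s)\vol^2(X_\cS)\left(\frac1m\sum_{j\in\cT}l'_j\right)^m ,
\]
with $\sum_{j\in\cS+s}l'_j=m$, $l'_j\leq l_j\leq L$ off $\cS+s$, and $1+l_s\leq (kc^2+1)/(k-m+1)$. I would then balance the prefactor $(1+l_s)$ against the sum by splitting $\cT$ into its part inside $\cS+s$ and its part outside. The aim is to absorb the inside part into the $(k+1)$ count of columns of $\cS+s$, which plausibly explains the $k+1$ in the statement.

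If that route does not close, a fallback is to average over all $k$-subsets of $\cS+s$ and use Cauchy--Binet to replace the worst-case $\cT\cap\cS$ by an averaged contribution. I expect the final inequality check here, namely that the prefactor times the averaged sum stays below $(k+1)L/m$ raised to the power $m$, to be the delicate step.
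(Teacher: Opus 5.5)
Your reduction to bounding $\sum_{j\in\cT}l_j$ is correct, and so is your derivation of \eqref{eqn:maxvol2-cons_res2}; both match the paper's argument. The gap is the first bound \eqref{eqn:maxvol2-cons_res}, which is the main claim. You correctly show that the estimate $\min(t,m)+(k-t)L$ fails once $L<m/(m+1)$, but what follows is a plan rather than a proof. Neither suggested route contains the ingredient that controls the overlap $\cT\cap\cS$.

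Working relative to $\cS+s$ with the facts you list (upper bounds on the $l'_j$ and $\sum_{j\in\cS+s}l'_j=m$) reproduces the same overlap problem. It also adds a factor $1+l_s\le 1+L$ that the $m$-th power must beat, and it need not. Already for $\cT$ disjoint from $\cS+s$ with $c=1$, $m=2$, $k=10$, these facts give only $(1+L)(kL/m)^m>((k+1)L/m)^m$. The Cauchy--Binet averaging only involves $k$-subsets of $\cS+s$, so it says nothing about a $\cT$ that leaves $\cS+s$.

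The missing idea is to use the termination test pointwise, not only in its averaged form \eqref{eqn:col_norm_bound}. Let $x=\max_{j\notin\cS}l_j\le L$. Since $l'_j\le l_j$ and $(1-\min_{j\in\cS}l'_j)(1+x)\le c^2$, every $j\in\cS$ satisfies $l_j\ge\alpha:=\max\{0,\,1-c^2/(1+x)\}$. Because $\sum_{j\in\cS}l_j=m$, the $k-t$ columns of $\cS\setminus\cT$ carry at least $(k-t)\alpha$ of that mass. Hence for $t\ge1$ you get $\sum_{j\in\cT}l_j\le m+(k-t)(x-\alpha)$; for $t=0$ you get simply $kL$.

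The function $x-\alpha$ is nonnegative and nondecreasing in $x$, so the worst case is $t=1$. If $1+x\le c^2$, the bound is at most $m+(k-1)(c^2-1)$, which is at most $(k+1)L$ because $(k+1)L\ge m+(c^2-1)k$. Otherwise the worst case is $x=L$, where $1-c^2/(1+L)=\frac{c^2(m-1)+1}{c^2k+1}$, and it remains to check
\[
m+(k-1)\left(L-\frac{c^2(m-1)+1}{c^2k+1}\right)\le(k+1)L .
\]
With $d=k-m+1$ this is equivalent to $2\frac{c^2k+1}{d}+d\,\frac{c^2+1}{c^2k+1}\ge4$. That holds by AM--GM, since the left side is at least $2\sqrt{2(c^2+1)}\ge4$ for $c\ge1$. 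This is how the paper closes the case $L<1$. It stays relative to $\cS$ throughout, so no prefactor $1+l_s$ ever has to be absorbed.
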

\begin{proof}
\begin{enumerate}[wide]
\item First, observe that when Algorithm~\ref{alg:vol_add_rem} terminates, we have
\begin{equation}\label{eqn:minlj_bound}
  \left(1 - \min_{j \in \cS} l'_j \right)\left(1 + \max_{j \notin \cS} l_j \right) \leq c^2\,.
\end{equation}
Using the averaging argument on above, we can obtain~\eqref{eqn:col_norm_bound}. Rearranging the terms in this equation results in 
\begin{equation}\label{eqn:maxli_bound}
  \max_{j \notin \cS}l_j \leq \frac{m + (c^2-1)k}{k-m+1}\,.
\end{equation}
The rest of the proof is based on these two facts.

\item Now, let $m + (c^2 - 1)k \geq k - m + 1$. In that case right-hand side in~\eqref{eqn:maxli_bound} is greater then or equal to $1$. Also note that $l_j$ for $j \in \cS$ are norms of the columns in $X_\cS^\dag X_\cS$, which results in $l_j \leq 1$. Therefore~\eqref{eqn:maxli_bound} can be extended to all $l_j$ in this case:
\begin{equation}\label{eqn:x_bound}
\max_{j \in \overline{1, n}}l_j \leq \frac{m + (c^2-1)k}{k-m+1}\,.
\end{equation}

Now, suppose that maximum volume submatrix is given by column indices in $\cM$. Then for this submatrix
\begin{equation}\label{eqn:frob_bound_1}
\normf{X_\cS^\dag X_\cM}^2 = \sum_{j \in \cM}l_j \leq k\frac{m + (c^2-1)k}{k-m+1}\,.
\end{equation}
Using the AM-GM inequality to bound the volume through the Frobenius norm we obtain
\begin{equation}\label{eqn:vol_bound_frob}
\begin{aligned}
\frac{\vol(X_\cM)}{\vol(X_\cS)} & = \vol(X_\cS^\dag X_\cM) \\
& \leq \left(\frac{\normf{X_\cS^\dag X_\cM}^2}{m}\right)^{\frac{m}{2}} = \left( \frac{k}{k-m+1} \left( 1 + \left( c^2 - 1 \right) \frac{k}{m} \right) \right)^{\frac{m}{2}}\,.
\end{aligned}
\end{equation}

\item In the case $m + (c^2 - 1)k < k - m + 1$, the above reasoning holds only if $\cS$ and $\cM$ have no common indices, which is not always the case. However we can bound the Frobenius norm of $X_\cM$ even in this case.  

Observe that $l'_j \leq l_j$ for all $j \in \overline{1, n}$, thus 
\[
\left(1 - \min_{j \in \cS} l_j \right)\left(1 + \max_{j \notin \cS} l_j \right) \leq \left(1 - \min_{j \in \cS} l'_j \right)\left(1 + \max_{j \notin \cS} l_j \right) \leq c^2\,.
\]
Denoting $\max_{j \notin \cS} l_j = x$, we have $l_j \geq \max\{0,\ 1 - c^2/(1 + x)\}$ for all $j \in \cS$. 

Suppose $\cS$ and $\cM$ have $r \geq 1$ common indices. Then, combining the above bound with the fact that $\normf{X_\cS^\dag X_\cS}^2 = \sum_{j \in \cS} l_j = m$, we obtain
\[
\sum_{j \in \cS \cap \cM}l_j = m - \sum_{j \in \cS \setminus \cM} l_j \leq m - (k - r)\max\left\{0, 1 - \frac{c^2}{1 + x}\right\}
\]
and
\begin{equation}\label{eqn:pinv_bound_x}
\normf{X_\cS^\dag X_\cM}^2 = \sum_{j \in \cM \setminus \cS}l_j + \sum_{j \in \cM \cap \cS}l_j \leq m + (k - r)\left(x -\max\left\{0, 1 - \frac{c^2}{1 + x}\right\}\right)\,.
\end{equation}

To bound the right-hand side of~\eqref{eqn:pinv_bound_x} we note that it is maximized when $r = 1$. Studying its dependence on $x$, we obtain that if $1 + x \leq c^2$, $\normf{X_\cS^\dag X_\cM}^2 \leq m + (k - r)x \leq m + (k - 1)(c^2 - 1)$, which is even better then~\eqref{eqn:frob_bound_1}. Otherwise, observe that right-hand side of~\eqref{eqn:pinv_bound_x} has positive derivative in $x$, and thus bounded by its value on the right boundary~\eqref{eqn:x_bound}, which produces
\begin{equation}\label{eqn:frob_bound_tp1}
\begin{aligned}
\normf{X_\cS^\dag X_\cM}^2 &\leq m + \frac{(k - 1)(m + (c^2 - 1)k)}{k - m + 1} + \frac{(k - 1)(c^2(m - 1) + 1)}{c^2k + 1}\\
&\leq m\frac{k+1}{k-m+1} \left( 1 + \left( c^2 - 1 \right) \frac{k}{m} \right) + R\,,
\end{aligned}
\end{equation}
where remainder equals
\[
R = m - 2\frac{m + (c^2-1)k}{k - m + 1} + \frac{(k - 1)(c^2(m - 1) + 1)}{c^2k + 1}\,.
\]

To prove the theorem we must demonstrate that $R \leq 0$. To do so, we introduce $d = k - m + 1$. Eliminating $m$ from $R$ using newly introduced variable and simplifying we obtain
\[
\begin{aligned}
R &= k - d + 1 - 2\frac{c^2k + 1 -d}{d} + \frac{(k - 1)(c^2(k - d) + 1)}{c^2k + 1}\\
&= 4 - 2\frac{c^2k + 1}{d} - d\frac{c^2 + 1}{c^2k + 1} \overset{(*)}{\leq} 4 - \sqrt{2(c^2 + 1)} \leq 0\,,
\end{aligned}
\]
$(*)$ follows from AM-GM inequality. The final bound on $\vol(X_\cM) / \vol(X_\cS)$ now follows from~\eqref{eqn:vol_bound_frob} applied to the newly obtained bound~\eqref{eqn:frob_bound_tp1} where $R \leq 0$.\qedhere
\end{enumerate}
\end{proof}

For some values of $k$ (for instance $k = 2m - 1$) this bound allows for finer estimates of $\vol(X_\cS)$ in Algorithm~\ref{alg:vol_add_rem} than the simple observation that $\vol(X_\cS)$ must increase by $c$ times on each iteration. We will split our result into two parts: for $k \leq 2m-1$ and $k > 2m-1$. For small $k$ we can immediately get an improvement using the following proposition.

\begin{proposition}\label{stt:2r-1}
Fix $X \in \R^{m \times n}$ ($m \leq n$, $\rank X = m$) and let $k \in \overline{m,2m - 1}$. Then in $O(nm^2 \log \log m + nm^2 / \log c)$ arithmetic operations it is possible to find an $m \times k$ submatrix $X_\cS$ satisfying 
\[
\vol \left(X_\cS\right) \geq \frac{c^{-m}}{\sqrt{\binom{2m - 1}{m}}}\max_{\cT \subseteq \overline{1, n}\,,\ \vert\cT\vert = k} \vol\left(X_\cT\right)\,.
\]
\end{proposition}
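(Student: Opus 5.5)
The plan is to reduce to the case $k'=2m-1$, where the second bound \eqref{eqn:maxvol2-cons_res2} of Theorem~\ref{thm:locvol_maxvol} applies, and then pass back to size $k$ by Cauchy--Binet. To reach a terminal set of size $2m-1$ cheaply, I would run Algorithm~\ref{alg:vol_add_rem} in stages with a decreasing sequence of parameters. This assumes $n \geq 2m-1$; otherwise $k=n$ is forced and there is nothing to do. I have not verified that the constants close for every $c$; the critical point is flagged in the third paragraph.

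\textbf{Step 1 (staged exchange at size $2m-1$).} Take $\cS$ of size $2m-1$ containing the CPQR set $\cI$. By Corollary~\ref{cor:vol_bounds_k}, its volume ratio to $\max_{|\cT|=2m-1}\vol(X_\cT)$ is at most $m^{m/2}\sqrt{\binom{2m-1}{m}}=e^{O(m\log m)}$. Run Algorithm~\ref{alg:vol_add_rem} successively with $c_1=m$, $c_{i+1}=\sqrt{c_i}$, and stop once $c_i \leq c$. After the stage with parameter $c_i$ the algorithm has exited, so Theorem~\ref{thm:locvol_maxvol} (applied at $k'=2m-1$) bounds the ratio by
\[
\Bigl(\tfrac{2m-1}{m}\bigl(1+(c_i^2-1)\tfrac{2m-1}{m}\bigr)\Bigr)^{m/2} \leq (4c_i^2)^{m/2} = 2^m c_i^m .
\]
Stage $i+1$ multiplies the volume by at least $c_{i+1}$ per swap. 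It therefore performs at most $\log_{c_{i+1}}(2^m c_i^m)=O(m)$ swaps, using $\log c_i = 2\log c_{i+1}$ and $c_{i+1}$ bounded below. The first stage likewise needs only $O(m\log m/\log m)=O(m)$ swaps. There are $O(\log\log m)$ stages, and each swap costs $O(nm)$, so this part costs $O(nm^2\log\log m)$. A last stage with the target parameter starts from ratio at most $2^m c_{\rm prev}^m \leq 2^m c^{2m}$. It needs $O(m + m/\log c)$ swaps, which gives the $O(nm^2/\log c)$ term.

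\textbf{Step 2 (back to size $k$).} Let $S'$ be the final set, of size $2m-1$, and let its volume ratio be at most $\rho$. By Cauchy--Binet, $\vol^2(X_{S'})=\sum_{\cT\subseteq S',|\cT|=m}\vol^2(X_\cT)$, so some $m$-subset $\cT_0\subseteq S'$ has $\vol(X_{\cT_0})\geq \vol(X_{S'})/\sqrt{\binom{2m-1}{m}}$. Extend $\cT_0$ to any $k$-set $\cS\subseteq S'$; this is possible since $k\leq 2m-1$. Appending columns can only increase the volume, so
\[
\vol(X_\cS)\geq \vol(X_{\cT_0}) \geq \frac{\vol(X_{S'})}{\sqrt{\binom{2m-1}{m}}} \geq \frac{1}{\rho\sqrt{\binom{2m-1}{m}}}\max_{|\cT|=2m-1}\vol(X_\cT) \geq \frac{1}{\rho\sqrt{\binom{2m-1}{m}}}\max_{|\cT|=k}\vol(X_\cT).
\]
Finding $\cT_0$ costs only $O(m^3)$ via CPQR of $X_{S'}$. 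Strictly, CPQR does not give the exact best $m$-subset; the clean alternative is to note that the surviving terms of the sum already suffice, or to remove the $m-1$ columns one at a time with minimal $l_j$. Each such removal loses a factor of at most $\sqrt{1-\min_j l_j}$ with $\sum_j l_j=m$. I would check which of these gives exactly the binomial factor.

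\textbf{Main obstacle.} The main obstacle is making the final ratio $\rho$ at most $c^m$ rather than $2^{m/2}c^m$. The factor $\bigl(\tfrac{2m-1}{m}\bigr)^{m/2}$ from \eqref{eqn:maxvol2-cons_res2} is the problem. I would first try running the final stage with a slightly smaller parameter $\tilde c$, chosen so that $\tfrac{2m-1}{m}\bigl(1+(\tilde c^2-1)\tfrac{2m-1}{m}\bigr)\leq c^2$. This is possible when $c^2>2$; the cost becomes $O(nm^2/\log\tilde c)=O(nm^2/\log c)$ for such $c$. For $c$ close to $1$ I would instead try to absorb the $2^{m/2}$ into the binomial slack. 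The double-counting in Step 2 loses a full $\sqrt{\binom{2m-1}{m}}\approx 2^m$, so a sharper accounting may have room to spare. Whichever way works, the runtime analysis of Step 1 is unchanged.
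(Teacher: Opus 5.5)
\textbf{There is a genuine gap, and it is the one you flag.} For $c^2\le(2m-1)/m$, in particular for $c$ near $1$, nothing in your proposal produces the factor $c^{-m}$, and the hoped-for binomial slack does not exist. At $k=m$, the factor $\sqrt{\binom{2m-1}{m}}$ lost in your Step 2 is exactly the factor the target allows. So what your route needs is precisely a $(2m-1)$-set $S'$ with
\[
\vol(X_{S'})\geq c^{-m}\max_{\vert\cT\vert=m}\vol(X_\cT)\,.
\]
The chain $\vol(X_{S'})\geq\rho^{-1}\max_{\vert\cT\vert=2m-1}\vol(X_\cT)\geq\rho^{-1}\max_{\vert\cT\vert=m}\vol(X_\cT)$ cannot deliver this. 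The bound of Theorem~\ref{thm:locvol_maxvol} on $\rho$ carries the factor $\bigl(\frac{2m-1}{m}\bigr)^{m/2}$ even at $c=1$. Your smaller-parameter trick needs $c^2>(2m-1)/m$. Even then, $\log\tilde c$ is not uniformly comparable to $\log c$ near that threshold.

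\textbf{The missing idea.} Measure $X_{S'}$ against the best $m\times m$ submatrix rather than the best $m\times(2m-1)$ one.
\begin{itemize}
\item Run the final stage with $(c')^2=1+(c^2-1)\frac{m}{2m-1}$. By \eqref{eqn:maxli_bound} at size $2m-1$, this gives $l_j\le c^2$ for $j\notin S'$, while $l_j\le 1$ holds for $j\in S'$ automatically.
\item Hadamard's inequality then gives $\vol(X_\cM)/\vol(X_{S'})=\vol(X_{S'}^\dag X_\cM)\le\prod_{j\in\cM}\sqrt{l_j}\le c^m$ for every $m$-set $\cM$. This is the displayed inequality.
\item Since $(c')^2\ge 1+\frac{c^2-1}{2}\ge c$, i.e.\ $\log c'\ge\frac12\log c$, your swap count is unaffected.
\item For general $k$, remove only $2m-1-k$ columns greedily. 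Each removal of a column with minimal $l_j$ keeps at least a factor $(\vert\cS\vert-m)/\vert\cS\vert$ of $\vol^2$, so you retain $\sqrt{\binom km/\binom{2m-1}m}$.
\item Finish with $\max_{\vert\cT\vert=k}\vol(X_\cT)\le\sqrt{\binom km}\max_{\vert\cT\vert=m}\vol(X_\cT)$ (Cauchy--Binet).
\end{itemize}
Your arbitrary extension of $\cT_0$ would throw away exactly the $\sqrt{\binom km}$ needed in that last step. This is the paper's argument.

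\textbf{Smaller remarks.} Your staged schedule $c_{i+1}=\sqrt{c_i}$ is a legitimate substitute for the paper's per-swap recursion $V'_{t+1}\ge(V'_t)^{1-1/m}$. However, a stage does not cost $O(m)$ swaps when $c_{i+1}$ is near $1$: its count is $2m+m\log 2/\log c_{i+1}$. The terms $m\log 2/\log c_{i+1}$ double from stage to stage, so the total is still $O(m\log\log m+m/\log c)$. Lastly, $n<2m-1$ does not force $k=n$. In that case take all $n$ columns as $S'$, where $l_j\le 1$ for every $j$.
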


\begin{proof} 
\begin{enumerate}[wide]
\item First, we use Theorem~\ref{thm:locvol_maxvol} to derive new kind of guarantees on volume of the selected submatrix. Suppose the Algorithm~\ref{alg:vol_add_rem} is running, and $V_t$ is the ratio of the volumes of currently selected submatrix relative to the maximum volume submatrix on iteration $t$. Denote 
\[
\left(c'_t\right)^2 = (1 + l_s)(1 - l'_r) = V_{t + 1}^2 / V_{t}^2\,.
\]
Then we can apply Theorem~\ref{thm:locvol_maxvol} for $c'_t$ to obtain
\[
V_{t} \geq \left(\frac{2m - 1}{m}\left(1 + \left(\frac{V_{t + 1}^2}{V_{t}^2} - 1\right)\frac{2m-1}{m} \right)\right)^{-\frac{m}{2}}\,.
\]
Transforming the above, we have
\begin{equation}\label{eqn:V_tV_t+1}
\frac{m}{2m - 1}V_t^{-\frac{2}{m}} \leq 1 + \left(\frac{V_{t + 1}^2}{V_t^2} - 1\right)\frac{2m - 1}{m} \leq \frac{2m - 1}{m}\frac{V_{t + 1}^2}{V_t^2}\,.
\end{equation}
Isolating $V_{t + 1}$ on the left-hand side, we get $V_{t + 1} \geq mV_{t}^{1-1/m}/(2m -1)$. Setting 
\[
V'_t = V_t \left(\frac{m}{2m - 1}\right)^{m}\,
\]
we obtain 
\begin{equation}\label{eqn:l_prime_recursion}
V_{t + 1}' \geq \left(V_{t}'\right)^{1 - \frac{1}{m}}\,.
\end{equation}

\item\label{itm:step_1_2r-1} Finding the desired submatrix is done in three steps. On the first step we run Algorithm~\ref{alg:vol_add_rem} with initial subset $\cS$ of size $2m - 1$ containing a subset identified by a CPQR algorithm. By Corollary~\ref{cor:vol_bounds_k} we have
\[
V'_0 \geq \left(\frac{m}{2m - 1}\right)^{m} \frac{1}{m^{m/2}\sqrt{\binom{2m - 1}{m}}} \geq \left(\frac{1}{4\sqrt{m}}\right)^{m}\,.
\]
Suppose we make swaps until $V'_t \geq e^{-m}$. According to~\eqref{eqn:l_prime_recursion}, it is enough to do $t$ iterations, where $t$ satisfies
\[
\left(\frac{1}{4 \sqrt{m}}\right)^{m \left(1 - \frac{1}{m}\right)^t} \geq e^{-m}\quad \Leftrightarrow\quad \left(1 - \frac{1}{m}\right)^t \leq \log^{-1}\left(4 \sqrt{m}\right)\,.
\]
Using the inequality $(1 - 1/m)^m \leq e^{-1/m}$, it is enough for $t$ to satisfy $t \geq m \log \log \left( 4 \sqrt{m} \right)$. Since each swap costs $O(nm)$ operations, this phase requires $O(nm^2 \log\log m)$.

\item\label{itm:step_2_2r-1} After the first phase, we have a submatrix of size $m \times (2m - 1)$ with $V'_t \geq e^{-m}$. Our target is to achieve $l_j \leq c^2$ for all $j$, as it will guarantee $\vol(X_\cS) / \vol(X_\cM) \leq c^m$ where $X_\cM$ is the maximal volume submatrix of size $m \times m$. This inequality will allow us to bound the volume of $m \times k$ submatrix obtained on the next phase.

By~\eqref{eqn:maxli_bound}, to guarantee $l_j \leq c^2$ we can run Algorithm~\ref{alg:vol_add_rem} with
\[
\left(c'\right)^2 = 1 + (c^2 - 1)\frac{m}{2m - 1}\,.
\]
This will take no more than
\[
\log_{1 + (c^2 - 1)\frac{m}{2m - 1}}\left(e\frac{2m - 1}{m}\right)^m = O\left(\frac{m}{\log c}\right)
\]
operations.

\item Finally, to obtain the $m \times k$ submatrix (recall $k \leq 2m - 1$), we iteratively remove $(2m - 1) - k$ columns from the $m \times (2m-1)$ submatrix, each time maximizing the volume of the remaining submatrix. By generalized Cauchy-Binet formula (Equation~2.1 in~\cite{derezinski2018reverse}),
\begin{equation}\label{eqn:gen_cauchy_binet}
\vol^2\left(X_\cS\right) = \binom{\vert\cS\vert - m}{\vert\cS\vert - m - 1}^{-1} \sum_{\substack{|\cT| = \vert\cS\vert - 1\,,\ \cT \subseteq \cS}} \det\left(X_\cT X_\cT^T\right)\,.
\end{equation}
Therefore when greedily removing one column, we will reduce the volume by no more then $(\vert\cS\vert - m) / \vert\cS\vert$ times. Combining all factors, we obtain
\[
\vol \left(X_\cS\right) \geq c^{-m}\sqrt{\binom{k}{m}/\binom{2m - 1}{m}}\max_{\cT \subseteq \overline{1, n}\,,\ \vert\cT\vert = m} \vol\left(X_\cT\right)\,.
\]
Using~\eqref{eqn:vol_bounds_k} we can formulate the above in terms of $m \times k$ submatrices of maximal volume, which is exactly what stated in the proposition. Greedy column removal is $O(nm)$ per iteration, thus the total complexity is $O(nm^2 \log \log m + nm^2 / \log c)$.\qedhere
\end{enumerate}
\end{proof}

Of course, one can prove analogous proposition with intermediate number of columns different from $2m - 1$ and potentially even better results, but that will not change the asymptotic complexity.

Now we prove similar proposition for $k \geq 2m - 1$.

\begin{proposition}\label{stt:expvol}
Fix $X \in \R^{m \times n}$ ($m \leq n$, $\rank X = m$) and let $k \in \overline{2m - 1,n}$. Then in $O(nm^2 \log \log m + nkm)$ arithmetic operations it is possible to find an $m \times k$ submatrix $X_\cS$ satisfying 
\[
\vol \left(X_\cS\right) \geq 6^{-m/2}\max_{\cT \subseteq \overline{1, n}\,,\ \vert\cT\vert = k} \vol\left(X_\cT\right)\,.
\]
\end{proposition}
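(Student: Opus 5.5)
Following the structure of the proof of Proposition~\ref{stt:2r-1}, we proceed in two phases: first we obtain a good $m \times (2m-1)$ submatrix, then we greedily \emph{add} columns up to $k$. We track the ratio of the current volume to the maximum volume among submatrices with the same number of columns.

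\textbf{Phase 1.} We run Phase~\ref{itm:step_1_2r-1} of the proof of Proposition~\ref{stt:2r-1} verbatim. Starting from a CPQR set padded to $2m-1$ columns, we perform $O(m\log\log m)$ swaps of Algorithm~\ref{alg:vol_add_rem}. By \eqref{eqn:l_prime_recursion} this reaches $V'_t \geq e^{-m}$, i.e. $V_t \geq e^{-m}\left(\frac{2m-1}{m}\right)^m$, at a cost of $O(nm^2\log\log m)$. If necessary we continue with $c'$ close to a constant (say $(c')^2 = 2$) for $O(m)$ further swaps, costing $O(nm^2)$. The goal is a constant-$c'$ local optimality of the $m\times(2m-1)$ submatrix, so that \eqref{eqn:maxli_bound} gives $l_j \leq C_0$ for all $j$, with an explicit constant $C_0$. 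The choice of $c'$ should make the final constant come out as $6$.

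\textbf{Phase 2.} We greedily append columns $s\in\argmax_{j\notin\cS} l_j$ until $|\cS| = k$, updating $l$ by \eqref{eqn:l_update_selection}. Each addition costs $O(nm)$, so this phase costs $O(nkm)$. To bound the resulting volume we compare with the maximum-volume $k$-set $\cM$. By Cauchy--Binet, $\vol^2(X_\cM) = \sum_{\cT\subseteq\cM, |\cT|=m}\det^2 X_\cT$. On the greedy side, \eqref{eqn:vol_change_append} gives $\vol^2(X_{\cS+s}) = (1+l_s)\vol^2(X_\cS)$ with $l_s = \max_j l_j$. A cleaner route is to bound $\sum_{j\notin\cS} l_j$ through $\tr(X^T(X_\cS X_\cS^T)^{-1}X)$, as in \eqref{eqn:ljadd}. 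Then, mirroring the reverse generalized Cauchy--Binet \eqref{eqn:gen_cauchy_binet}, the identity
\[
\sum_{j\notin\cS}\vol^2(X_{\cS+j}) = \Bigl(n-|\cS| + \normf{X_\cS^\dag X}^2 - m\Bigr)\vol^2(X_\cS)
\]
shows that each greedy step multiplies the squared volume by at least the average factor. We compare this with the analogous telescoping for the optimal sets of each size, using that the maximum over $(p+1)$-sets is at most the maximum over $p$-sets times $\frac{p+1}{p+1-m}\bigl(1+\max_j l_j^{\text{opt}}\bigr)$.

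\textbf{Combining.} We multiply the per-step ratios from $p = 2m-1$ to $k-1$. The expected conclusion is that the product of the ratios (greedy gain over optimal gain) telescopes into a factor bounded independently of $k$. The binomial-type factors $\prod_p \frac{p+1}{p+1-m}$ should cancel against the corresponding factors on the optimal side. Combined with the Phase~1 constant, this yields $\vol^2(X_\cS) \geq 6^{-m}\max_{|\cT|=k}\vol^2(X_\cT)$.

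\textbf{Main obstacle.} The crux is Phase~2: showing that greedy addition loses only a factor $c_0^m$ overall rather than per step. I would first try an exchange-style argument. At each size $p$, the greedy set $\cS$ and the optimal $p$-set $\cM_p$ satisfy $\vol(X_{\cM_{p+1}})^2/\vol(X_{\cM_p})^2 \lesssim \vol(X_{\cS+s})^2/\vol(X_\cS)^2$ up to a ratio tending to $1$. Alternatively, I would bound $\vol(X_\cM)/\vol(X_\cS) = \vol(X_\cS^\dag X_\cM)$ directly via AM--GM as in \eqref{eqn:vol_bound_frob}. This requires controlling $\sum_{j\in\cM} l_j$ for the final greedy $\cS$, using that the greedily added columns had the largest $l_j$ and that $\max_j l_j$ only decreases as columns are added. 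Getting the exact constant $6$ will require tuning the Phase~1 target accordingly.
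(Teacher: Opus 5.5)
Phase~2, which you yourself flag as the crux, is not proved, and none of the routes you sketch closes it.

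\begin{itemize}
\item The average-factor identity only shows that the greedy gain is at least $1 + (\normf{X_\cS^\dag X}^2 - m)/(n-|\cS|)$. This depends on $n$ and can be arbitrarily close to $1$, so it cannot be matched against the growth of the optimum.
\item Your bound for the optimal sets carries an unnecessary factor $(1+\max_j l_j^{\mathrm{opt}})$. The generalized Cauchy--Binet formula \eqref{eqn:gen_cauchy_binet} gives simply $\max_{|\cT|=p+1}\vol^2(X_\cT) \le \frac{p+1}{p+1-m}\max_{|\cT|=p}\vol^2(X_\cT)$. With the extra factor nothing cancels.
\item The stepwise comparison ``optimal growth $\lesssim$ greedy growth'' is asserted without a mechanism. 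It cannot be established at steps where the greedy gain $1+\max_{j\notin\cS} l_j$ falls below $\frac{p+1}{p+1-m}$, and at such steps the ratio $V$ can genuinely decrease.
\item The AM--GM route at the final set needs $\max_{j\notin\cS} l_j = O(m/(k-m+1))$ there, which the greedy rule does not by itself guarantee.
\end{itemize}
So each of your last two ideas covers only one regime. What is missing is the argument that combines them.

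The missing idea is the paper's dichotomy with threshold $m/(p-m+1)$ for the current greedy set $\cS$, $|\cS|=p\ge 2m-1$.
\begin{itemize}
\item If $\max_{j\notin\cS} l_j \ge m/(p-m+1)$, then by \eqref{eqn:vol_change_append} the squared volume grows by at least $\frac{p+1}{p+1-m}$. This is exactly the Cauchy--Binet bound on the growth of the optimum, so $V$ does not decrease.
\item If $\max_{j\notin\cS} l_j < m/(p-m+1)$, then, because $1-\min_{j\in\cS} l'_j \le 1$, the set satisfies the stopping rule of Algorithm~\ref{alg:vol_add_rem} with $(c')^2 = 1 + m/(p-m+1)$. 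Theorem~\ref{thm:locvol_maxvol} then re-anchors $V$ regardless of the history: since $\frac{p+1}{p-m+1}\le 2$ and $\frac{p}{p-m+1}\le 2$ for $p\ge 2m-1$, it gives $V \ge 6^{-m/2}$.
\end{itemize}
The paper applies this at the last iteration of the second kind and uses the first case afterwards. That leaves the step out of that iteration uncovered. To fix it, apply the theorem to the set obtained right after that step: every $l_j$ only decreases under \eqref{eqn:l_update_selection}, so the hypothesis persists, and the constant for size $p+1$ is still below $6$. This turns the argument into a clean induction on $p$.

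For the base case, your Phase~1 with $(c')^2=2$, together with the second part of Theorem~\ref{thm:locvol_maxvol} at $k=2m-1$, gives directly
\[
V \ge \Bigl(\tfrac{2m-1}{m}\bigl(1+\tfrac{2m-1}{m}\bigr)\Bigr)^{-m/2} \ge 6^{-m/2}.
\]
This is better than going through the stated bound of Proposition~\ref{stt:2r-1}, as the paper does, because $\binom{2m-1}{m}^{-1/2} < 2^{-m/2}$ once $m\ge 3$.
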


\begin{proof}
We start with submatrix obtained using Proposition~\ref{stt:2r-1} with $c^2 = 3$ and number of selected columns $2m - 1$. That is, an $m \times (2m - 1)$ submatrix $X_\cS$ with volume satisfying
\[
V_{0} \geq \frac{c^{-m}}{\sqrt{\binom{2m - 1}{m}}} \geq \frac{3^{-\frac{m}{2}}}{2^{\frac{m}{2}}} = 6^{-\frac{m}{2}}\,.
\]
Here, as in proposition~\ref{stt:2r-1}, $V_t$ denotes the ratio of the volumes of submatrix selected on iteration $t$ relative to the maximum volume submatrix of same size. Such a submatrix can be obtained in no more than $O(nm^2)$ operations.

We then iteratively add columns, greedily maximizing volume on each step. Then there are two options.

\begin{enumerate}[wide]
\item\label{itm:l_geq_frac} Suppose that at each iteration $t$ we have $\max_{j \notin \cS} l_j \geq m/(\vert\cS\vert-m+1)$. Then, the squared volume of selected submatrix grows by $(1 + l_j) = (\vert\cS\vert + 1)/(\vert\cS\vert - m + 1)$ upon selection of a new column $j$~\eqref{eqn:vol_change_append}.

At the same time, by generalised Cauchy-Binet formula~\eqref{eqn:gen_cauchy_binet} ratio of volumes of maximal volume submatrices of size $m \times \vert\cS\vert$ and $m \times (\vert\cS\vert + 1)$ can not exceed $(\vert\cS\vert + 1)/(\vert\cS\vert - m + 1)$. Therefore $V_{t + 1} \geq V_t$, and the final ratio remains at least $6^{-m/2}$.

\item Now, suppose that on some of the iterations $\max_{j \notin \cS} l_j < m/(\vert\cS\vert-m+1)$. Consider the last iteration $t < k - (2m - 1)$ where the above holds. We then can use Theorem~\ref{thm:locvol_maxvol} with parameter $(c')^2 = (\vert\cS\vert + 1) / (\vert\cS\vert - m + 1)$ (indeed, this guarantees $(1 - \min_{j \in \cS} l'_j)(1 + \max_{j \notin \cS} l_j) \leq 1 + \max_{j \notin \cS} l_j \leq (c')^2$), which states
\[
V_t \geq \left( \frac{\vert\cS\vert+1}{\vert\cS\vert-m+1} \left( 1 + \frac{\vert\cS\vert}{\vert\cS\vert - m + 1}\right) \right)^{-\frac{m}{2}} \geq \left(\frac{2m}{m}\left(1 + \frac{2m - 1}{m}\right)\right)^{-\frac{m}{2}} \geq 6^{-\frac{m}{2}}\,.
\]
For all subsequent iterations, as per Item~\ref{itm:l_geq_frac}, the ratio of current volume to the maximal volume will be at least $6^{-m/2}$.
\end{enumerate}

Regarding computational complexity, obtaining initial $m \times (2m - 1)$ submatrix using the method from Proposition~\ref{stt:2r-1} is $O(nm^2\log m)$ ($c = 3$ is fixed in our case). After that we perform $k - (2m - 1)$ column selections. Each of them can be performed in $O(nm)$ operations using~\eqref{eqn:l_update_selection}. Thus the total asymptotic complexity is $O(nkm)$.
\end{proof}

As a consequence, when Algorithm~\ref{alg:vol_add_rem} is run after such initialization, it requires $O \left( nm^2 / \log c \right)$ operations. We now combine the results of Propositions~\ref{stt:2r-1} and~\ref{stt:expvol} into a single initialization procedure, summarized as Algorithm~\ref{alg:adv_init}, and provide a Theorem~\ref{thm:adv_init}.

\begin{algorithm}[htbp]
\caption{Advanced initialization.}\label{alg:adv_init}
\begin{algorithmic}[1]
\Require $X \in \R^{m \times n}$ ($m \leq n$, $\rank X = m$), sampling parameter $k \in \overline{m, n}$, parameter $c \geq 1$.
\Ensure Subset of column indices $\cS \subseteq \overline{1,n}$ with $|\cS| = k$.

\State Run $CPQR$ on $X$ to obtain index set $\cI$ of cardinality $m$.
\State\label{itm:exchanges} Run Algorithm~\ref{alg:vol_add_rem} on $X$ with initial index set $\cS \supseteq \cI$ of cardinality $2m - 1$ and $c^2 = \min(e,\ 1 + 2m/(2m - 1))$. Save obtained $\cS$.

\State Initialize $Y \gets (X_\cS X_\cS^T)^{-1}$ and array $l$, where $l_j \gets x_j^T Y x_j$ for $j \in \overline{1, n}$. 

\If{$k \leq 2m - 1$}
    \State \MultL{%
    Greedily remove $2m - 1 - k$ columns from $\cS$, each time removing $r \in \argmin_{j \in \cS} l_j$, updating $Y$ according to Sherman-Morrison formula and $l$ according to~\eqref{eqn:l_update_removal}.
    }
\Else
    \State \MultL{%
    Greedily append $k - (2m - 1)$ columns, each time adding $s \in \argmax_{j \notin \cS} l_j$, updating $Y$ according to Sherman-Morrison formula and $l$ according to~\eqref{eqn:l_update_selection}.
    }
\EndIf
\State \textbf{return} $\cS$
\end{algorithmic}
\end{algorithm}

\begin{theorem}\label{thm:adv_init}
Let matrix $X$, sampling parameter $k$, and parameter $c > 1$ satisfy input requirements of Algorithm~\ref{alg:vol_add_rem}. Then, if Algorithm~\ref{alg:vol_add_rem} (or Dominant algorithm~\cite{Osinsky23}) is called with the set $\cS$ produced by Algorithm~\ref{alg:adv_init}, it requires no more than $O(nm^2 / \log c)$ (or $O(nkm / \log c)$ for Dominant) total arithmetic operations. 
\end{theorem}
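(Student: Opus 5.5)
The argument is a volume-ratio count. Both Algorithm~\ref{alg:vol_add_rem} and the Dominant algorithm multiply the volume of the current submatrix by more than $c$ at every swap they perform. The volume can never exceed $\max_{|\cT|=k}\vol(X_\cT)$. So the number of swaps is at most $\log_c(1/V_0)$, where $V_0$ is the ratio of the starting volume to the maximal one. The plan is to show that the set produced by Algorithm~\ref{alg:adv_init} satisfies $V_0\geq C^{-m}$ for an absolute constant $C$. The number of swaps is then $O(m/\log c)$. Multiplying by the per-swap cost, $O(nm)$ for Algorithm~\ref{alg:vol_add_rem} (from the proof of Theorem~\ref{thm:vol_add_rem}) and $O(nk)$ for Dominant, gives $O(nm^2/\log c)$ and $O(nkm/\log c)$ respectively.

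To get the bound on $V_0$, I split into the two cases of Algorithm~\ref{alg:adv_init}.

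\emph{Case $k\leq 2m-1$.} Step~\ref{itm:exchanges} runs Algorithm~\ref{alg:vol_add_rem} at size $2m-1$ with a fixed constant $c_0^2=\min(e,1+2m/(2m-1))\in(1,e]$. I would reuse the three phases in the proof of Proposition~\ref{stt:2r-1} with this constant $c_0$; the greedy removal is the final phase. That proposition gives
\[
V_0\geq c_0^{-m}\binom{2m-1}{m}^{-1/2}\geq (c_0^2\cdot 4)^{-m/2},
\]
since $\binom{2m-1}{m}\leq 4^m$. This is $C^{-m}$ with $C=2\sqrt e$.

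\emph{Case $k>2m-1$.} The greedy additions follow the proof of Proposition~\ref{stt:expvol}. That argument only needs the starting $(2m-1)$-column set to satisfy $V\geq C^{-m}$ for some constant; Proposition~\ref{stt:expvol} took $c^2=3$, and here $c_0^2\leq e<3$ is at least as good. Its dichotomy then keeps the ratio at least $\min(C^{-m},6^{-m/2})$ through all additions.

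The step I expect to be the main obstacle is the first case, because Algorithm~\ref{alg:adv_init} as written runs Algorithm~\ref{alg:vol_add_rem} only once, with $c_0$. Proposition~\ref{stt:2r-1}, by contrast, uses an intermediate phase with $(c')^2=1+(c^2-1)m/(2m-1)$ to guarantee $l_j\leq c^2$. I would check that with $c_0^2\leq 1+2m/(2m-1)$, bound \eqref{eqn:maxli_bound} at size $k=2m-1$ already gives
\[
\max_j l_j\leq \frac{m+(c_0^2-1)(2m-1)}{m}\leq 3.
\]
This bounds the volume relative to the maximal $m\times m$ submatrix by a constant to the power $m$, which is all the argument needs. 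The cost of Algorithm~\ref{alg:adv_init} itself is not counted in the statement (it is $O(nm^2\log\log m+nkm)$ from the propositions), so only the constant in the exponent matters.
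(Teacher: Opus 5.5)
Your proposal is correct and follows the paper's proof. The paper also bounds the initial volume ratio after Algorithm~\ref{alg:adv_init} by $C^{-m}$ via Propositions~\ref{stt:2r-1} and~\ref{stt:expvol}, with $c_0$ playing the role of $c'$ for $c^2=3$ so that \eqref{eqn:maxli_bound} gives $\max_j l_j\le 3$, and then multiplies the $O(m/\log c)$ swap count by the per-swap cost.

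One small slip: running Algorithm~\ref{alg:vol_add_rem} with $c_0$ gives $V_0\ge 3^{-m/2}\binom{2m-1}{m}^{-1/2}\ge 12^{-m/2}$, not $c_0^{-m}\binom{2m-1}{m}^{-1/2}$, as your own last paragraph effectively shows. Your use of $\binom{2m-1}{m}\le 4^m$ together with the $\min(C^{-m},6^{-m/2})$ form of the dichotomy is actually more careful than the paper's constant $6^{-m/2}$, which implicitly needs $\binom{2m-1}{m}\le 2^m$ and so fails for $m\ge 3$ (though only the constant is affected, not the $O(\cdot)$ claim).
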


\begin{proof}
Note that Algorithm~\ref{alg:adv_init} implements the methods of obtaining highly nondegenerate submatrices from Proposition~\ref{stt:2r-1} with $c = 3$ and Proposition~\ref{stt:expvol}. Indeed, State~\ref{itm:exchanges} represents Items~\ref{itm:step_1_2r-1},~\ref{itm:step_2_2r-1} in the proof of Proposition~\ref{stt:2r-1} with $c = 3$. The stopping criterion in the first item is $V'_t \geq e^{-m}$, which is guaranteed to hold~\eqref{eqn:V_tV_t+1} if $V'_{t + 1} / V_t < e$. In the second phase, $c^2 = 1 + 2m/(2m - 1)$. To satisfy both stopping criteria, we must take minimum $c$. All other states copy the proof of Propositions~\ref{stt:2r-1},~\ref{stt:expvol} directly.

In case $k \leq 2m - 1$ Propositions~\ref{stt:2r-1} states
\[
\frac{\vol \left(X_\cS\right)}{\vol(X_\cM)} \geq \frac{3^{-\frac{m}{2}}}{\sqrt{\binom{2m - 1}{m}}} \geq 6^{-\frac{m}{2}}\,,
\]
where $X_\cM$ is maximum volume submatrix of size $m \times k$. If $k > 2m - 1$ we have bound with $6^{-m/2}$ directly (Proposition~\ref{stt:expvol}).

When Algorithm~\ref{alg:vol_add_rem} or Dominant algorithm run afterwards, they require no more than $\log_c 6^{m/2} = O(m / \log c)$ iterations, which results in $O(nm^2 / \log c)$ and $O(nkm / \log c)$ complexities, as one iteration costs $O(nm)$ for Algorithm~\ref{alg:vol_add_rem} and $O(nk)$ for Dominant algorithm. 
\end{proof}

Note that while advanced initialization reduces asymptotic complexity of Algorithm~\ref{alg:vol_add_rem} and Dominant algorithm, roughly speaking removing a factor $\log m$, it may not be always more efficient on practice for $k < 2m - 1$, as overhead from selecting and removing additional $2m - 1 - k$ columns can be larger then benefit of performing less swaps. This issue, along with others, is investigated in the succeeding section.

\section{Numerical experiments}\label{sec:experiments}

In this section we describe numerical experiments, targeted at answering the following two questions:
\begin{enumerate}
    \item \textbf{Target functional:} While Algorithm~\ref{alg:vol_add_rem} and Dominant algorithm have identical theoretical guarantees, submatrices they identify are different. Is there a significant difference in performance of algorithms in terms of $\normf{X_\cS^\dag X}$? Can advanced initialization provide better performance?
    \item \textbf{Swap count:} How Algorithm~\ref{alg:vol_add_rem} and Dominant algorithm compare in terms of number of performed swaps? Can advanced initialization reduce that count and provide speed-up on practice?
\end{enumerate}

Algorithms that search for highly nondegenerate submatrices are implemented in C++ using the Eigen library for matrix operations and common decompositions. For plotting and visualization we utilized Matplotlib Python. The complete codebase is available at \url{https://github.com/KozyrevIN/subset-selection-for-matrices}.

In all experiments we compared six exchange-based strategies from the Table~\ref{tbl:alg_summary}, along with well known Algorithm~2 in~\cite{faster_subset_selection} (Frob-removal-orth), which costs $O(nm^2 + nm(n - k))$. In all cases we set $c = 1$.

In Table~\ref{tbl:alg_summary} we summarize the algorithm versions. CPQR initialization consists of running CPQR and using the first $k$ indices from the permuted index set (first $m$ elements in this set are pivots chosen by CPQR) as a starting value of $\cS$. Greedy initialization consists of identifying first $m$ pivot column using CPQR and then choosing $k - m$ additional columns greedily maximizing volume on each step.

\begin{table}[htbp]
    \centering
    \renewcommand{\arraystretch}{1.5}
    \begin{tabular}{|l|c|c|}
        \hline
        \diagbox[width=5cm]{Initialization}{Exchange} & Dominant~\cite{Osinsky23} & Algorithm~\ref{alg:vol_add_rem} \\ 
        \hline
        CPQR & Dominant-CPQR & Dominant-split-CPQR \\ 
        \hline
        greedy & Dominant-greedy & Dominant-split-greedy \\ 
        \hline
        Algorithm~\ref{alg:adv_init} & Dominant-advanced & Dominant-split-advanced \\ 
        \hline
    \end{tabular}
    \caption{Algorithm names generated by combining exchange and initialization strategies.}
    \label{tbl:alg_summary}
\end{table}

\subsection{Experimental methodology}

In our experiments we used the following two types of matrices:

\begin{enumerate}
    \item Matrices with i.i.d. elements sampled from Gaussian distribution with $\mu = 0$, $\sigma = 1$. Maximum-entropy and rotationally invariant properties of those matrices provide a strictly unstructured, isotropic test case.
    
    \item For the second test case we have chosen matrices related to graph theory. Specifically, we use a matrix of transposed right singular vectors obtained from the oriented edge-vertex incidence matrix of a random weighted connected graph. Both graph topology and edge weights were sampled uniformly. This benchmark, on the contrary to the first one, involves highly structured matrices. For $k = m$, minimizing $\normf{X_\cS^\dag X}$ is equivalent to finding a low-stretch spanning tree~\cite{faster_subset_selection}. 
\end{enumerate}

We fixed the matrix size at $100 \times 5000$ with $k$ ranging from $100$ to $300$. For each value of $k$ we generated $64$ matrices and applied all algorithms mentioned above, measuring $\normf{X_\cS^\dag X}$ and number of swaps performed by the algorithm before it reaches the termination criterion.

\subsection{Comparison in terms of the target functional}

The first observation we made comes down to the fact that for large $k$, the difference in performance of algorithms is negligible. Comparing the means of $1/\normf{X_\cS^\dag X}$ across $64$ trials for various algorithms, we obtain that the ratio between best score minus worst score divided by the mean score does not exceed $5.24\%$ for $k \geq 110$, $2.8\%$ for $k \geq 120$ and $2.08\%$ for $k \geq 130$.

For $k \leq 110$ ($\leq 1.1m$) the difference in performance is significant and illustrated on the Figure~\ref{fig:tests_frob_norm}.

\begin{figure}[htbp]
\centering
\includegraphics[width=\textwidth]{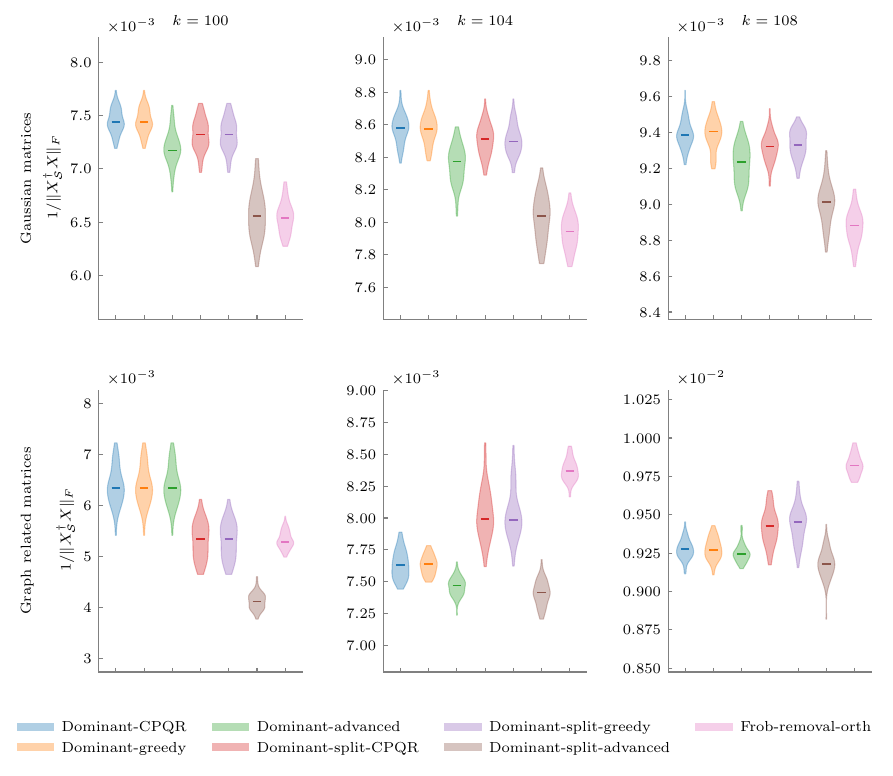}
\caption{Comparison between performance of various algorithms for small values of $k$.}
\label{fig:tests_frob_norm}
\end{figure}

Based on the Figure~\ref{fig:tests_frob_norm}, we can conclude that there is no single winner among the tested algorithms, the performance of algorithms heavily depends on the matrix type and selected number of columns. Next are more concrete observations
\begin{enumerate}
    \item Dominant-split algorithm performs comparably to Dominant algorithm, even though it generally has a lower complexity.
    \item Advanced initialization tends to make results worse both for Dominant and Algorithm~\ref{alg:vol_add_rem}. This is counter-intuitive: advanced initialization provides more degrees of freedom and stronger guarantees on the initial submatrix than other methods. We conclude, that it only needs to be utilized, when CPQR produces submatrix with more than exponentially low volume (compared to, e.g., average expected volume), which never happened.
\end{enumerate}

\subsection{Comparison in terms of swap count}

The comparison between algorithms in terms of number of performed swaps is presented on Figure~\ref{fig:tests_swap_count}.

\begin{figure}[htbp]
\centering
\includegraphics[width=\textwidth]{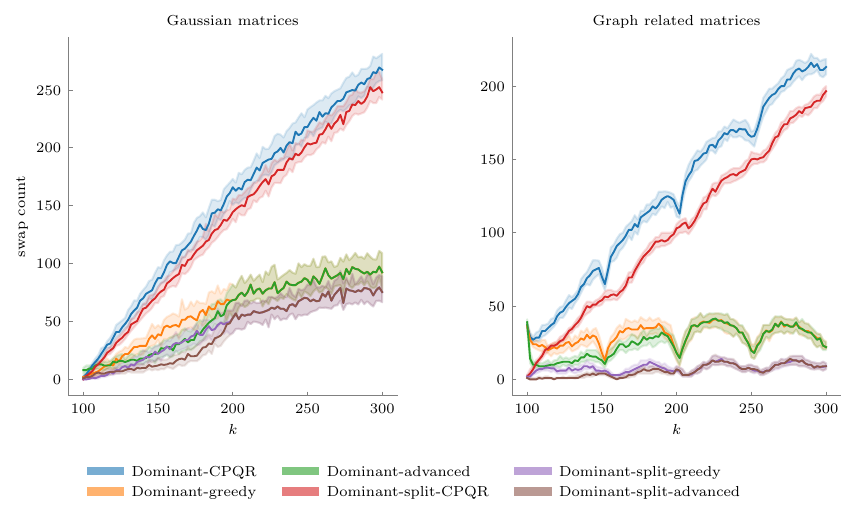}
\caption{Comparison between numbers of swaps performed by various algorithms for small values of $k$. Here, solid line denotes the mean value and shaded regions denote standard deviation.}
\label{fig:tests_swap_count}
\end{figure}

The conclusions we can draw from Figure~\ref{fig:tests_swap_count} are the following:
\begin{enumerate}
    \item Either greedy or advanced initialization is a requisite on practice for any relatively large $k$. Those methods reduce the required swap count dramatically (by orders of magnitude for graph-related matrices). Note that this provides performance benefit, as selecting or removing a column based on the greedy maximization of volume is at least two times cheaper than performing a swap. We also observe that the amount of swaps after greedy and advanced approaches for both algorithms is almost always limited by $m$ independent of $k$.

    \item Dominant algorithm generally requires more swaps then Dominant-split, independently of the initialization type. Note also that each iteration of the Dominant algorithm is dependent on $k$. This makes Dominant-split algorithm favourable for large $k$.

    \item An additional interesting observation is that swap count has visible dips around $k = 150, 200,\dots$. For such values of $k$, it is possible to construct a subgraph with almost all vertices having the same degree, and subset selection algorithms indeed identify sub graphs with this property. We hypothesize that the optima produced by such highly symmetric subgraphs are more pronounced and easier to identify. 
\end{enumerate}

\section{Conclusion}\label{sec:conclusion}

In this article, we proposed two modifications to the Dominant algorithm~\cite{Osinsky23} for subset selection.

First, Algorithm~\ref{alg:vol_add_rem} (Dominant-split) reduces the per-iteration cost from $O(nkm)$ to $O(nm^2)$ while preserving identical theoretical guarantees. Numerical experiments confirm that it also requires fewer swaps and performs comparably in terms of $\normf{X_\cS^\dag X}$.

Second, Algorithm~\ref{alg:adv_init} provides an advanced initialization with stronger volume guarantees than CPQR, reducing the total complexity to $O(nm^2\log\log m + nkm/\log c)$ for Dominant and $O(nm^2\log\log m + nm^2/\log c)$ for Algorithm~\ref{alg:vol_add_rem}. However, experiments show that this initialization can lead to worse $\normf{X_\cS^\dag X}$ when $k \approx m$.

Based on the numerical experiments, we recommend the following practical guidelines. For large $k$, Dominant-split with greedy initialization should be preferred, as it combines low per-iteration cost with significantly fewer swaps. Advanced initialization is beneficial only when CPQR produces a submatrix with unusually low volume; in typical settings, greedy initialization suffices.

\section*{Declarations}

\textbf{Funding.} The research was funded by the Russian Science Foundation (project No. 25-21-00159).

\textbf{Conflict of interests.} The authors declare that they have no conflicts of interest.

\textbf{Compliance with ethical standards.} This article does not contain any studies with human participants or animals performed by any of the authors.

\textbf{Data availability.} Full C++ code used to perform numerical experiments is openly available at \url{https://github.com/KozyrevIN/subset-selection-for-matrices}.

\textbf{Author contributions.} A.O. conceived the original ideas and proofs. I.K. finalized the proofs and performed numerical experiments. Both authors participated in preparing the article.

\bibliographystyle{unsrt}

\bibliography{refs}

@InProceedings{VolNPhard,
  title = 	 {On largest volume simplices and sub-determinants},
  author =       {Di Summa, M. and Eisenbrand, F. and Faenza, Y. and Moldenhauer, C.},
  booktitle = 	 {Proceedings of the Twenty-Sixth Annual ACM-SIAM Symposium on Discrete Algorithms },
  pages = 	 {315--323},
  year = 	 {2015},
  doi = {10.1137/1.9781611973730.23},
  address = {Philadelphia, PA},
  publisher =    {SIAM},
}

@article{MichDoc,
  title={Rectangular submatrices of maximum volume and their computation},
  author={Mikhalev, A. and Oseledets, I.},
  journal={Doklady Mathematics },
  year={2015},
  volume={91},
  pages={267-268},
}

@article{Mosaic,
title = {Mosaic-Skeleton approximations},
journal = {Calcolo},
volume = {33},
pages = {47–57},
year = {1996},
doi = {10.1007/BF02575706},
author = {Tyrtyshnikov, E. E.},
}

@article{VolExpRect,
title = {Exponential Inapproximability of Selecting a Maximum Volume Sub-matrix},
journal = {Algorithmica},
volume = {65},
pages = {159–176},
year = {2013},
doi = {10.1007/s00453-011-9582-6},
author = {Çivril, A. and Magdon-Ismail, M.},
}

@InProceedings{Maxvol2bound,
  title = 	 {Combinatorial Algorithms for Optimal Design},
  author =       {Madan, V. and Singh, M. and Tantipongpipat, U. and Xie, W.},
  booktitle = 	 {Proceedings of the Thirty-Second Conference on Learning Theory },
  pages = 	 {2210--2258},
  year = 	 {2019},
  editor = 	 {Beygelzimer, Alina and Hsu, Daniel},
  volume = 	 {99},
  series = 	 {Proceedings of Machine Learning Research},
  publisher =    {PMLR},
}

@article{mearxiv,
  title = "Rectangular maximum volume and projective volume search algorithms",
  journal = "arXiv 1809.02334 (Submitted on 7 Sep 2018) ",
  author = "Osinsky, A.",
  year = "2018",
}

@article{faster_subset_selection,
author = {Avron, Haim and Boutsidis, Christos},
title = {Faster Subset Selection for Matrices and Applications},
journal = {SIAM Journal on Matrix Analysis and Applications},
volume = {34},
number = {4},
pages = {1464-1499},
year = {2013},
doi = {10.1137/120867287},

URL = {https://doi.org/10.1137/120867287},
eprint = {https://doi.org/10.1137/120867287}
}

@article{subset_selection_complexity,
title = {On selecting a maximum volume sub-matrix of a matrix and related problems},
journal = {Theoretical Computer Science},
volume = {410},
number = {47},
pages = {4801-4811},
year = {2009},
issn = {0304-3975},
doi = {https://doi.org/10.1016/j.tcs.2009.06.018},
url = {https://www.sciencedirect.com/science/article/pii/S0304397509004101},
author = {Ali Çivril and Malik Magdon-Ismail}
}

@article{BOUTSIDIS2014273,
title = {A note on sparse least-squares regression},
journal = {Information Processing Letters},
volume = {114},
number = {5},
pages = {273-276},
year = {2014},
issn = {0020-0190},
doi = {https://doi.org/10.1016/j.ipl.2013.11.011},
url = {https://www.sciencedirect.com/science/article/pii/S0020019013002913},
author = {Christos Boutsidis and Malik Magdon-Ismail}
}

@article{10.1145/1132973.1132981,
author = {Foster, Leslie and Kommu, Rajesh},
title = {Algorithm 853: An efficient algorithm for solving rank-deficient least squares problems},
year = {2006},
issue_date = {March 2006},
publisher = {Association for Computing Machinery},
address = {New York, NY, USA},
volume = {32},
number = {1},
issn = {0098-3500},
url = {https://doi.org/10.1145/1132973.1132981},
doi = {10.1145/1132973.1132981},
journal = {ACM Trans. Math. Softw.},
month = mar,
pages = {157–165},
numpages = {9}
}

@article{doi:10.1137/090780882,
author = {Ipsen, I. C. F. and Kelley, C. T. and Pope, S. R.},
title = {Rank-Deficient Nonlinear Least Squares Problems and Subset Selection},
journal = {SIAM Journal on Numerical Analysis},
volume = {49},
number = {3},
pages = {1244-1266},
year = {2011},
doi = {10.1137/090780882},

URL = { 
    
        https://doi.org/10.1137/090780882
    
    

},
eprint = { 
    
        https://doi.org/10.1137/090780882
    
    

}
}

@article{OSINSKY2018221,
title = {Pseudo-skeleton approximations with better accuracy estimates},
journal = {Linear Algebra and its Applications},
volume = {537},
pages = {221-249},
year = {2018},
issn = {0024-3795},
doi = {https://doi.org/10.1016/j.laa.2017.09.032},
url = {https://www.sciencedirect.com/science/article/pii/S0024379517305669},
author = {A. I. Osinsky and N. L. Zamarashkin}
}

@article{Osinsky2023-bg,
  title     = "Lower bounds for column matrix approximations",
  author    = "A. I. Osinsky",
  journal   = "Computational Mathematics and Mathematical Physics",
  publisher = "Pleiades Publishing Ltd",
  volume    =  63,
  number    =  11,
  pages     = "2024-2037",
  month     =  nov,
  year      =  2023,
  copyright = "https://www.springernature.com/gp/researchers/text-and-data-mining",
  language  = "en"
}

@article{Arioli2015-eb,
  title     = "Preconditioning linear least-squares problems by identifying a
               basis matrix",
  author    = "Arioli, Mario and Duff, Iain S",
  journal   = "SIAM Journal of Scientific Computing",
  publisher = "Society for Industrial \& Applied Mathematics (SIAM)",
  volume    =  37,
  number    =  5,
  pages     = "S544--S561",
  month     =  jan,
  year      =  2015
}

@article{chen2015discrete,
  title={Discrete signal processing on graphs: Sampling theory.},
  author={Chen, Siheng and Varma, Rohan and Sandryhaila, Aliaksei and Kova{\v{c}}evi{\'c}, Jelena},
  journal={IEEE transactions on signal processing},
  volume={63},
  number={24},
  pages={6510--6523},
  year={2015},
  publisher={IEEE}
}

@article{doi:10.1137/140977898,
author = {Boutsidis, Christos and Woodruff, David P.},
title = {Optimal CUR Matrix Decompositions},
journal = {SIAM Journal on Computing},
volume = {46},
number = {2},
pages = {543-589},
year = {2017},
doi = {10.1137/140977898},

URL = { 
    
        https://doi.org/10.1137/140977898
    
    

},
eprint = { 
    
        https://doi.org/10.1137/140977898
    
    

}
}

@article{GOREINOV19971,
title = {A theory of pseudoskeleton approximations},
journal = {Linear Algebra and its Applications},
volume = {261},
number = {1},
pages = {1-21},
year = {1997},
issn = {0024-3795},
doi = {https://doi.org/10.1016/S0024-3795(96)00301-1},
url = {https://www.sciencedirect.com/science/article/pii/S0024379596003011},
author = {S.A. Goreinov and E.E. Tyrtyshnikov and N.L. Zamarashkin}
}

@article{tsitsvero2016signals,
  title={Signals on graphs: Uncertainty principle and sampling},
  author={Tsitsvero, Mikhail and Barbarossa, Sergio and Di Lorenzo, Paolo},
  journal={IEEE Transactions on Signal Processing},
  volume={64},
  number={18},
  pages={4845--4860},
  year={2016},
  publisher={IEEE}
}

@article{Osinsky23,
author = {A. I. Osinsky},
title = {Volume-based subset selection},
journal = {Numerical Linear Algebra with Applications},
volume = {31},
number = {1},
pages = {e2525},
doi = {https://doi.org/10.1002/nla.2525},
url = {https://onlinelibrary.wiley.com/doi/abs/10.1002/nla.2525},
eprint = {https://onlinelibrary.wiley.com/doi/pdf/10.1002/nla.2525},
year = {2024}
}

@article{doi:10.1287/moor.2021.1129,
author = {Nikolov, Aleksandar and Singh, Mohit and Tantipongpipat, Uthaipon (Tao)},
title = {Proportional Volume Sampling and Approximation Algorithms for A-Optimal Design},
journal = {Mathematics of Operations Research},
volume = {47},
number = {2},
pages = {847-877},
year = {2022},
doi = {10.1287/moor.2021.1129},

URL = { 
    
        https://doi.org/10.1287/moor.2021.1129
    
    

},
eprint = { 
    
        https://doi.org/10.1287/moor.2021.1129
    
    

}
}

@article{Huan_Jagalur_Marzouk_2024, title={Optimal experimental design: Formulations and computations}, volume={33}, DOI={10.1017/S0962492924000023}, journal={Acta Numerica}, author={Huan, Xun and Jagalur, Jayanth and Marzouk, Youssef}, year={2024}, pages={715–840}}

@article{osti_276532,
  author       = {Gu, M and Eisenstat, S C},
  title        = {Efficient algorithms for computing a strong rank-revealing QR factorization},
  doi          = {10.1137/0917055},
  url          = {https://www.osti.gov/biblio/276532},
  journal      = {SIAM Journal on Scientific Computing},
  issn         = {ISSN SJOCE3},
  number       = {4},
  volume       = {17},
  place        = {United States},
  year         = {1996},
  month        = {07}}

@article{MIKHALEV2018187,
title = {Rectangular maximum-volume submatrices and their applications},
journal = {Linear Algebra and its Applications},
volume = {538},
pages = {187-211},
year = {2018},
issn = {0024-3795},
doi = {https://doi.org/10.1016/j.laa.2017.10.014},
url = {https://www.sciencedirect.com/science/article/pii/S0024379517305931},
author = {A. Mikhalev and I.V. Oseledets}
}

@article{ZHANG2025109668,
title = {Discrete linear canonical transform on graphs: Uncertainty principle and sampling},
journal = {Signal Processing},
volume = {226},
pages = {109668},
year = {2025},
issn = {0165-1684},
doi = {https://doi.org/10.1016/j.sigpro.2024.109668},
url = {https://www.sciencedirect.com/science/article/pii/S0165168424002883},
author = {Yu Zhang and Bing-Zhao Li}
}

@misc{carrel2025,
      title={Interpolatory Dynamical Low-Rank Approximation: Theoretical Foundations and Algorithms}, 
      author={Benjamin Carrel and Daniel Kressner and Hei Yin Lam and Bart Vandereycken},
      year={2025},
      eprint={2510.19518},
      archivePrefix={arXiv},
      primaryClass={math.NA},
      url={https://arxiv.org/abs/2510.19518}, 
}

@article{deim,
author = {Drma\v{c}, Zlatko and Gugercin, Serkan},
title = {A New Selection Operator for the Discrete Empirical Interpolation Method---Improved A Priori Error Bound and Extensions},
journal = {SIAM Journal on Scientific Computing},
volume = {38},
number = {2},
pages = {A631-A648},
year = {2016},
doi = {10.1137/15M1019271},

URL = { 
    
        https://doi.org/10.1137/15M1019271
    
    

},
eprint = { 
    
        https://doi.org/10.1137/15M1019271
    
    

}
}

@article{derezinski2018reverse,
  title={Reverse iterative volume sampling for linear regression},
  author={Derezi{\'n}ski, Micha{\l} and Warmuth, Manfred K},
  journal={Journal of Machine Learning Research},
  volume={19},
  number={23},
  pages={1--39},
  year={2018}
}

@book{doi:10.1137/1.9781421407944,
author = {Golub, Gene H. and Van Loan, Charles F.},
title = {Matrix Computations - 4th Edition},
publisher = {Johns Hopkins University Press},
year = {2013},
doi = {10.1137/1.9781421407944},
address = {Philadelphia, PA},
edition   = {},
URL = {https://epubs.siam.org/doi/abs/10.1137/1.9781421407944},
eprint = {https://epubs.siam.org/doi/pdf/10.1137/1.9781421407944}
}

@inbook{good_submatrix,
author = {S. A. Goreinov and I. V. Oseledets and D. V. Savostyanov and E. E. Tyrtyshnikov and N. L. Zamarashkin},
title = {How to Find a Good Submatrix},
booktitle = {Matrix Methods: Theory, Algorithms and Applications},
publisher = "World Scientific Publishing",
pages = {247-256},
year={2010},
doi = {10.1142/9789812836021_0015},
URL = {https://www.worldscientific.com/doi/abs/10.1142/9789812836021_0015},
eprint = {https://www.worldscientific.com/doi/pdf/10.1142/9789812836021_0015}
}

\end{document}